\DeclareMathOperator{\sqf}{sqf} 
\DeclareMathOperator{\cbf}{cbf} 
\DeclareMathOperator{\spf}{spf} 
\DeclareMathOperator{\sgn}{sgn}
\newtheorem{theorem}{Theorem}
\newtheorem{lemma}[theorem]{Lemma}
\newtheorem{corollary}[theorem]{Corollary}
\numberwithin{theorem}{section}
\theoremstyle{remark}
\newtheorem*{remark}{Remark}
\let\@@pmod\pmod
\DeclareRobustCommand{\pmod}{\@ifstar\@pmods\@@pmod}
\def\@pmods#1{\mkern 2mu({\operator@font mod}\mkern 3mu#1)}
\begin{document}

\title[An explicit hybrid estimate for $L(1/2+it,\chi)$]{An explicit hybrid estimate
for $L(1/2+it,\chi)$}
\author[G.A. Hiary]{Ghaith A. Hiary}
\thanks{Preparation of this material is partially supported by
the National Science Foundation under agreements No. 
 DMS-1406190.}
\address{Department of Mathematics, The Ohio State University, 231 West 18th
Ave, Columbus, OH 43210.}
\email{hiary.1@osu.edu}
\subjclass[2010]{Primary 11Y05.}
\keywords{Van der Corput method, Weyl method, exponential sums, Dirichlet
$L$-functions, powerfull modulus, hybrid estimates, explicit estimates.}

\begin{abstract}
An explicit hybrid estimate for $L(1/2+it,\chi)$ is derived, where 
$\chi$ is a Dirichlet character modulo $q$. 
The estimate applies when $t$ is bounded away from zero, and 
is most effective when $q$ is powerfull, 
yielding an explicit Weyl bound in this case.
The estimate takes a particularly simple form if $q$ is a sixth power.
Several hybrid lemmas of 
van der Corput--Weyl type are presented.
\end{abstract}

\maketitle

\section{Introduction} \label{intro}

Let $\chi$ be a Dirichlet character modulo $q$. 
Let $L(1/2+it,\chi)$ be the corresponding Dirichlet $L$-function
on the critical line.
Let $\tau(q)$ be the number of divisors of $q$. 
If $|t|\ge 3$, say, we define 
the analytic conductor of $L(1/2+it,\chi)$
to be $\mathfrak{q}:=q|t|$.
 
 We are interested in finding
an explicit hybrid estimate 
for $L(1/2+it,\chi)$ in terms of $\mathfrak{q}$ and $\tau(q)$. 
Specifically, we would like to find constants 
$c$, $\kappa_1$, $\kappa_2$, $\kappa_3$, and $t_0\ge 3$ as small 
as possible, such that 
\begin{equation}\label{sought bound}
|L(1/2+it,\chi)| \le
c\, \tau(q)^{\kappa_1}\mathfrak{q}^{\kappa_2} \log^{\kappa_3}\mathfrak{q}, \qquad
(|t|\ge t_0).
\end{equation}

If $|t|\le t_0$, then estimating $L(1/2+it,\chi)$
reduces, essentially, to bounding
pure character sums.
Barban, Linnik, and Tshudakov~\cite{blt} gave Big-$O$ bounds 
for such sums, as well as some applications.

The convexity bound in our context is $L(1/2+it,\chi)\ll 
\mathfrak{q}^{1/4}$.
This can be derived using the standard method of the 
approximate functional equation. 
Habsieger derived such an approximate equation 
in \cite{habsieger}.
And we use this
in \textsection{\ref{bounds proofs}} to prove that
if $\chi$ is a primitive character\footnote{We consider the principal character
as neither primitive nor imprimitive.}
modulo $q>1$, then we have the convexity bound
\begin{equation}\label{convexity bound}
|L(1/2+it,\chi)| \le 124.46 \mathfrak{q}^{1/4},\qquad
(\mathfrak{q} \ge 10^9,\,\, |t|\ge \sqrt{q}).
\end{equation}
Previously, Rademacher~\cite{rademacher} derived the explicit bound
\begin{equation}
|L(\sigma+it,\chi)| \le
\left(\frac{q|1+\sigma+it|}{2\pi}\right)^{\frac{1+\eta-\sigma}{2}}\zeta(1+\eta),
\end{equation}
valid if $0<\eta\le 1/2$, $\sigma\le 1+\eta$, and $\chi\pmod q$ is
primitive. This is nearly a convexity bound
except for an additional $\eta>0$ in the exponent.

Using partial summation, 
we obtain an explicit bound applicable for any $t$. 
Specifically, if $\chi$ is primitive modulo $q>1$, 
then we obtain in  \textsection{\ref{bounds proofs}} that 
\begin{equation}\label{partial summation bound}
|L(1/2+it,\chi)|\le 4 q^{1/4}\sqrt{(|t|+1)\log q}.
\end{equation}
The bound \eqref{partial summation bound} 
is weaker than the convexity bound in general, 
but it can be useful in the limited region where $t$ is small.

Our main result is Theorem~\ref{main theorem}.
This theorem supplies the first example of 
an explicit hybrid Weyl bound 
(i.e.\ with $\kappa_2=1/6$ in \eqref{sought bound})
for an infinite set of Dirichlet $L$-functions; namely, 
the set of Dirichlet $L$-functions corresponding to powerfull moduli.
Theorem~\ref{main theorem} takes a particularly
simple form if  
$q$ is a sixth power and $\chi$ is primitive, 
yielding Corollary~\ref{main theorem simple} below. 
\begin{corollary}\label{main theorem simple}
Let $\chi$ be a primitive Dirichlet character modulo $q$. 
If $q$ is a sixth power, then
\begin{equation}
|L(1/2+it,\chi)| \le 9.05 \tau(q) \mathfrak{q}^{1/6}
\log^{3/2} \mathfrak{q},\qquad (|t|\ge 200).
\end{equation}
\end{corollary}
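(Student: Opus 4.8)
The plan is to derive Corollary~\ref{main theorem simple} as a specialization of the general hybrid estimate in Theorem~\ref{main theorem}, so the bulk of the work is in reading off the constants. First I would start from an approximate functional equation for $L(1/2+it,\chi)$ of the shape $L(1/2+it,\chi)=\sum_{n\le X}\chi(n)n^{-1/2-it}+(\text{dual sum})+(\text{error})$, with $X$ of order $\sqrt{\mathfrak q}=\sqrt{q|t|}$, as produced by Habsieger's version used in \textsection\ref{bounds proofs}. This reduces the problem to bounding a Dirichlet polynomial $\sum_{n\sim N}\chi(n)n^{-it}$ for dyadic ranges $N\ll\sqrt{\mathfrak q}$, and a partial-summation / dyadic-decomposition step (contributing the $\log\mathfrak q$ powers) turns this into $O(\log\mathfrak q)$ sums of the form $\bigl|\sum_{N<n\le 2N}\chi(n)e(-t\log n/2\pi)\bigr|$.

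Next I would apply the hybrid van der Corput--Weyl lemmas announced in the abstract to each such sum. Writing $\chi$ via its Gauss sum or directly exploiting that $q$ is a sixth power, say $q=q_0^6$, one splits $n$ into residue classes modulo $q_0^3$ (or a suitable power) so that on each class $\chi(n)$ becomes a pure additive character in a variable of modulus $q_0^3$, while $n^{-it}$ is handled by a Weyl/van der Corput $k$-th derivative test with $k=3$ (cubing), which is what yields the Weyl exponent $\kappa_2=1/6$. The sixth-power hypothesis is precisely what makes the modulus-side cube and the $t$-side cube align, so that after the third difference one is left with a clean Gauss-sum-type evaluation rather than incomplete character sums with an awkward conductor; this alignment is why the constant collapses to the simple $9.05\,\tau(q)$. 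The factor $\tau(q)^{\kappa_1}$ with $\kappa_1=1$ enters from counting the number of relevant residue classes / common divisors that arise when completing the character sums.

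The main obstacle I expect is bookkeeping the explicit constants through the chain: the approximate functional equation error term, the dyadic summation, the process $B$ (Poisson) and process $A$ (Weyl differencing) steps each carry numerical constants, and one must verify that for $|t|\ge 200$ (and the implied lower bound on $\mathfrak q$) all the inequalities used—stationary-phase remainder bounds, $B_k$-process range conditions, and the passage from the completed sum to $\mathfrak q^{1/6}$—hold with the claimed $9.05$. I would organize this by first proving Theorem~\ref{main theorem} with explicit $c,\kappa_1,\kappa_2,\kappa_3$, then substitute $q=q_0^6$ so that the general conductor-dependent factors simplify (e.g.\ $\gcd$-type quantities become powers of $q_0$), and finally choose $t_0=200$ large enough to absorb lower-order terms into the stated constant. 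The routine parts—the explicit derivative tests and the geometric-series estimates in the dyadic sum—I would cite from the hybrid lemmas rather than redo here.
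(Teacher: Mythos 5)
Your high-level plan is right on one point: the Corollary really is derived by specializing Theorem~\ref{main theorem} (setting $B=B_1=1$ via Lemma~\ref{postnikov lemma} since $\chi$ is primitive, using $\sqf(q)=\cbf(q)=1$ and $\spf(q)\le 1$ since $q$ is a sixth power, bounding $\sqrt{\Lambda(D)\tau(D)}\le\tau(D)\le 0.572\,\tau(q)$, and using $\tau(q)\ge 7$, $q\ge 2^6$, $\mathfrak q\ge 2^6 t_0$ for the final numerical sweep). However, your description of how the underlying Theorem is obtained — which you treat as the meat of the proof — diverges from the paper in ways that matter.

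First, you propose to start from an approximate functional equation with a main sum of length about $\sqrt{\mathfrak q}$. The paper does \emph{not} do this for Theorem~\ref{main theorem} or the Corollary: it starts from the full Dirichlet series $\sum_{n\ge 1}\chi(n)n^{-1/2-it}$ and partitions into four ranges ($n\ll\mathfrak q^{1/3}$, $\mathfrak q^{1/3}\ll n\ll\mathfrak q^{2/3}$, $\mathfrak q^{2/3}\ll n\ll\mathfrak q$, and the tail controlled by P\'olya--Vinogradov). In fact the paper explicitly remarks that an AFE-based main sum of length $\sqrt{\mathfrak q}$ would allow relaxing the hypothesis from sixth power to cube — so the AFE route would prove a different (stronger) statement and does not correspond to the proof of this Corollary. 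The AFE is used in the paper only for the convexity bound~\eqref{convexity bound}.

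Second, your invocation of a ``van der Corput $k$-th derivative test with $k=3$ (cubing)'' and a ``third difference'' does not match the mechanism producing the Weyl exponent $1/6$ here. Lemma~\ref{corput lemma 2 2} reduces to a \emph{quadratic} Taylor approximation ($J=2$ in Lemma~\ref{exp reduce}), splits into residue classes modulo $C$, linearizes $\chi$ via the Postnikov-type Lemma~\ref{postnikov lemma}, and applies a single Cauchy--Schwarz / Weyl-differencing step (the $A$-process once), together with the well-spacing Lemma~\ref{well spacing lemma} to combine $q$-side and $t$-side savings. There is no cubing. Relatedly, your explanation that the sixth-power hypothesis ``makes the modulus-side cube and $t$-side cube align'' is off target: what it actually buys is that the arithmetic factors $\sqf(q)$, $\cbf(q)$, $\spf(q)$ become $\le 1$ and the theorem's polynomials $Z(X),W(X)$ collapse to manageable multiples of $\tau(q)X^{3/2}$ and $\tau(q)X$. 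Your plan as stated would need to be reworked along these lines to actually reproduce the constant $9.05$.
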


In the notation of \eqref{sought bound}, 
Corollary~\ref{main theorem simple} asserts that
if $q$ is a sixth power and $\chi$ is primitive, then the choice 
$c=9.05$, $\kappa_1=1$, $\kappa_2=1/6$, $\kappa_3=3/2$, and $t_0=200$
is admissible. The constant $\kappa_3=3/2$ arises
from two sources: a dyadic division 
that contributes $1$, and the Weyl 
differencing method (see \cite[\textsection{5.4}]{titchmarsh}) 
which contributes $1/2$. 
The constant $\kappa_1=1$ arises, in part, when counting
the number of solutions to quadratic congruence equations 
in the Weyl differencing method.
The $\kappa_2=1/6$ arises from proving that, on average, square-root
cancellation occurs in certain short 
segments of the dyadic pieces 
$\sum_{V\le n< 2V}\frac{\chi(n)}{n^{1/2+it}}$.
The constant $c=9.05$ is largely contributed by the part of the main 
sum over $\mathfrak{q}^{1/3}\ll n\ll \mathfrak{q}^{2/3}$.
Last, the constant $t_0=200$ is due to technical reasons, 
and can be lowered with some work.

We state the main theorem below.
See \textsection{\ref{main notation}} for the definitions of 
$\sqf(q)$, $\cbf(q)$, $\spf(q)$, $B$, $B_1$, $D$, and $\Lambda(D)$. 
For now we remark that if $\chi$ is primitive, then $B=B_1=1$. 
And if $q$ is sixth power, then
$\sqf(q)=\cbf(q)=\spf(q) = 1$. The number $\Lambda(D)$ is bounded by $\tau(D)$,
and $D$ is usually of size about $q^{1/3}$.

\begin{theorem}\label{main theorem}
Let $\chi$ be a Dirichlet character modulo $q$.
If $|t|\ge 200$, then
\begin{equation}
|L(1/2+it,\chi)| \le \mathfrak{q}^{1/6} Z(\log \mathfrak{q})+W(\log
\mathfrak{q})
\end{equation}
where
\begin{equation*}
\begin{split}
Z(X)&:= 6.6668 \sqrt{\cbf(q)} - 16.0834 \spf(q) +15.6004 \spf(q)X\\
&+1.7364\sqrt{\Lambda(D) \cbf(q)(65.5619 - 17.1704 X - 2.4781 X^2 + 0.6807 X^3)}\\
&+1.7364 \sqrt{\Lambda(D) \cbf(q) B \tau(D/B) (-1732.5 - 817.82 X +71.68 X^2
+ 47.57 X^3)},\\
&\\
W(X)&:= -101.152 - 195.696 B_1 \sqf(q)+ 19.092 X + 94.978 B_1 \sqf(q) X.
\end{split}
\end{equation*}
\end{theorem}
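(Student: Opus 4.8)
The plan is to start from Habsieger's approximate functional equation for $L(1/2+it,\chi)$, which expresses the $L$-value as a main sum $\sum_{n}\chi(n)n^{-1/2-it}$ of length $\asymp\sqrt{\mathfrak{q}}$ (with an appropriate cutoff), plus a dual sum of exactly the same shape multiplied by $\chi(-1)$ and a Gauss-sum factor of modulus $1$; so both sums are of the same type and it suffices to bound one of them. Applying partial summation to strip off the $n^{-1/2}$ weight, the problem reduces to bounding, uniformly, the character-twisted exponential sums
\[
S(V)=\sum_{V\le n<2V}\chi(n)\,e\!\left(-\tfrac{t}{2\pi}\log n\right),\qquad V\ll\sqrt{\mathfrak{q}}.
\]
I would split the range of $V$ dyadically and treat three regimes differently: (i) for $V$ below roughly $\mathfrak{q}^{1/3}$ I bound $S(V)$ trivially by its length and sum the resulting geometric-type contributions --- this produces the polynomial-in-$\log\mathfrak{q}$ term $W(\log\mathfrak{q})$, and is responsible for the dominant numerical constant; (ii) for $V$ between $\mathfrak{q}^{1/3}$ and $\mathfrak{q}^{2/3}$ I apply the Weyl differencing method twice (the two-step Weyl / van der Corput $k=3$ process), which is exactly the regime that yields the exponent $1/6$; (iii) for $V$ above $\mathfrak{q}^{2/3}$ I use the same machinery but exploit that the sum is short.

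The technical heart is regime (ii), handled by the hybrid van der Corput--Weyl lemmas of the paper. Differencing $S(V)$ once replaces $\chi(n)$ by $\chi(n)\overline{\chi(n+h)}$ and $\log n$ by the finite difference $\log n-\log(n+h)$; differencing again introduces a second shift $h'$ and a character of the shape $\chi(n)\overline{\chi(n+h)}\,\overline{\chi(n+h')}\,\chi(n+h+h')$. The key structural input is that $q$ is close to a sixth power, measured by the smallness of the deficiencies $\sqf(q)$, $\cbf(q)$, $\spf(q)$: writing $q$ in terms of its cube-full part one extracts a modulus $D$ of size about $q^{1/3}$ so that, after completing the twice-differenced character sum, it reduces to counting solutions of a quadratic congruence modulo a divisor of $q$, and the number of such solutions is controlled by $\Lambda(D)\le\tau(D)$, hence ultimately by $\tau(q)$ --- this is where $\kappa_1=1$ enters. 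Carrying the resulting bound for $S(V)$ through the differencing inequality, optimizing the length of the differencing range against $V$ and $|t|$, and summing over the $\ll\log\mathfrak{q}$ dyadic blocks in regime (ii) produces a contribution of the shape $\mathfrak{q}^{1/6}$ times a polynomial in $\log\mathfrak{q}$ of degree $3/2$: degree $1$ from the dyadic sum and degree $1/2$ from the square root produced by each differencing step. This is precisely the term $\mathfrak{q}^{1/6}Z(\log\mathfrak{q})$, the several $\sqrt{\Lambda(D)\cbf(q)\cdots}$ pieces corresponding to the two completed sums (with the factor $B\,\tau(D/B)$ accounting for imprimitivity of $\chi$).

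I expect the main obstacle to be bookkeeping rather than conceptual: making the hybrid van der Corput--Weyl lemmas fully explicit --- in particular controlling the error terms in the completion/Poisson steps and pinning down the exact solution count of the quadratic congruence uniformly in the shifts $h,h'$ and in the divisors of $q$ --- and then propagating all constants cleanly through partial summation, the two differencing inequalities, the parameter optimization, and the dyadic summation. Care is needed to verify that the short tail ($V\lesssim\mathfrak{q}^{1/3}$) and the long tail ($V\gtrsim\mathfrak{q}^{2/3}$) are genuinely absorbed into $W$ and $Z$ respectively, and that the hypothesis $|t|\ge 200$ is strong enough for every estimate invoked: it enters when comparing $V$ with $|t|$, when the functional-equation cutoff is meaningful, and in the ranges where the differencing process is legitimate and the derivative bounds on $-\tfrac{t}{2\pi}\log n$ have the required sign and size.
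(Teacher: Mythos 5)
Your high-level intuition about where the exponent $1/6$ and the factor $\Lambda(D)$ come from, and about the $\log^{3/2}$ arising as (dyadic) $\times$ (one square root), is on the right track, but the mechanism you propose diverges from the paper's proof at several points that matter, and at least one of your steps would not go through as stated.

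First, the paper does \emph{not} start from Habsieger's approximate functional equation. It starts from the full Dirichlet series \eqref{dirichlet series}, so the main sum has length $\approx \mathfrak{q}$, not $\approx\sqrt{\mathfrak{q}}$, and there is no dual sum to symmetrize away. The range $n\gg\mathfrak{q}$ is then controlled by the P\'olya--Vinogradov inequality via Lemma~\ref{exp reduce} with $J=0$. The paper explicitly remarks (end of \textsection\ref{intro}) that the approximate-functional-equation route would let one relax ``$q$ a sixth power'' to ``$q$ a cube,'' but that is \emph{not} the route taken, and building around Habsieger's formula would require you to re-derive the twisted van der Corput lemmas for sums of length $\sqrt{\mathfrak{q}}$ and handle the remainder $\mathcal{R}(t,\chi)$, none of which is in the paper.

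Second, and more substantively, you propose applying Weyl differencing twice (a two-step $k=3$ process). The paper applies Weyl--van der Corput differencing only \emph{once}, inside Lemma~\ref{corput lemma 2 2}: after reducing the phase to a quadratic polynomial (Lemma~\ref{exp reduce} with $J=2$), the sum is split into residue classes mod $C\approx q^{1/3}$, the Postnikov-type Lemma~\ref{postnikov lemma} converts $\chi(\ell+Ck)$ into $\chi(\ell)$ times a quadratic exponential in $k$, Cauchy--Schwarz is applied over $\ell$, and a single van der Corput--Weyl differencing (plus the Kusmin--Landau lemma) handles the inner sum. The $q$-aspect saving is bought by the Postnikov decomposition, not by a second round of differencing. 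There is no completion/Poisson step; the solution count $\Lambda_m$ of $x^2\equiv 1\pmod{P_m}$ appears because, after Postnikov and one differencing, the linear coefficient involves $\overline{\ell}^2$, so as $\ell$ ranges over reduced residues each class modulo $P_m$ is hit at most $\Lambda_m\le\Lambda(D)$ times. Your ``twice-differenced character sum completed by Poisson'' scenario is a different (Burgess/Heath-Brown-flavored) mechanism that the paper explicitly does \emph{not} pursue.

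Third, your allocation of terms is reversed: the initial range $n\lesssim\mathfrak{q}^{1/3}$ contributes, via \eqref{main theorem region 0}, a term $\mathfrak{v}_0\sqrt{\cbf(q)}\,\mathfrak{q}^{1/6}$ that is part of $Z$, not $W$. The non-$\mathfrak{q}^{1/6}$ function $W$ instead collects the $B_1\sqf(q)$-weighted pieces from the range $\mathfrak{q}^{2/3}\ll n\ll\mathfrak{q}$ (where Lemma~\ref{corput lemma 1 1} with $J=1$ is used) and the P\'olya--Vinogradov tail $n\gg\mathfrak{q}$. This is not a purely cosmetic point: the quantities $\sqf(q)$ vs.\ $\cbf(q)$ vs.\ $\spf(q)$ track which root of $q$ appears, and you would not reproduce the specific structure of $Z$ and $W$ in Theorem~\ref{main theorem} with the decomposition you describe.

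In short, the ideas of dyadic decomposition, trivial bound for small $n$, single van der Corput--Weyl step combined with a Postnikov-type $p$-adic linearization, and well-spacing to merge the $q$- and $t$-savings are what actually carry the proof. Your proposal substitutes a second Weyl differencing (with Poisson completion) for the Postnikov step and starts from a different main formula; those changes would require reproving the key lemmas and would lead to a different bound, not the one stated.
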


For many applications, 
it suffices to focus on the case where 
$\chi$ is primitive.
For if not, then letting $\chi_1\pmod{q_1}$ be the primitive 
character inducing $\chi$ and using 
the Euler product  and analytic continuation of $L(s,\chi)$,
we have
\begin{equation}\label{imprimitive to primitive}
|L(1/2+it,\chi)| 
\le |L(1/2+it,\chi_1)|\prod_{\substack{p|q\\ p\nmid q_1}}(1+1/\sqrt{p}).
\end{equation}
Thus, we obtain an explicit bound on $L(1/2+it,\chi)$
by bounding $L(1/2+it,\chi_1)$
and using the inequality \eqref{imprimitive to primitive}. 
In our proof of Theorem~\ref{main theorem}, though, 
we bound general sums of the form \eqref{general f sum},
and keep track of the dependence on $B$ and $B_1$.

The main devices in our proofs 
are the hybrid van der Corput--Weyl
Lemmas \ref{corput lemma 1 1} and \ref{corput lemma 2 2}. 
These lemmas provide explicit bounds for sums of the form 
\begin{equation}\label{general f sum}
\sum_{n=N+1}^{N+L} \chi(n)e^{2\pi i f(n)},
\end{equation}
where we take $f(x) = -\frac{t}{2\pi}\log x$  in our application. 
A pleasant feature of the resulting bounds 
is that they naturally split into 
two main terms, one originating from $\chi(n)$ 
and the other originating from $n^{-iqt}$. 
In particular, we can detect cancellation in the $q$ and
$t$ aspects separately, then 
combine the savings routinely
using the well-spacing Lemma~\ref{well spacing lemma}.

The starting point in our proof of Theorem~\ref{main theorem} 
is the Dirichlet series
\begin{equation}\label{dirichlet series}
L(1/2+it,\chi) = \sum_{n=1}^{\infty} \frac{\chi(n)}{n^{1/2+it}},
\end{equation}
valid for $\chi$ nonprincipal. (If $\chi$ is principal, we use a bound 
for the Riemann zeta function.) We partition the sum in \eqref{dirichlet series} 
into four parts: $1\ll n\ll
\mathfrak{q}^{1/3}$ which is bounded trivially, 
$\mathfrak{q}^{1/3}\ll n\ll \mathfrak{q}^{2/3}$ for which
Lemma~\ref{corput lemma 1 1} is used,
$\mathfrak{q}^{2/3} \ll n\ll \mathfrak{q}$
for which Lemma~\ref{corput lemma 2 2} is used, and
 the tail $\mathfrak{q}\ll n$ which is bounded using the P\'olya--Vinogradov 
inequality. 

We remark that the restriction in Corollary~\ref{main theorem simple} 
that $q$ is a sixth
power may be relaxed to $q$ is a cube provided that one starts with a main sum of
length about $\sqrt{\mathfrak{q}}$ (as in the approximate functional equation)
instead of the main sum \eqref{dirichlet series}. 
One then applies van der Corput lemmas
analogous to those in \cite{hiary-corput}, but for the twisted sums
\eqref{general f sum}. 

Interest in powerfull modulus $L$-functions has
grown recently, both from theoretical and computational
perspectives. 
Mili\'cevi\'c~\cite{milicevic} 
has recently derived sub-Weyl bounds for pure character sums to prime-power modulus.
And the author~\cite{hiary-char-sums} had derived an algorithm
to compute hybrid sums to
powerfull modulus in $\mathfrak{q}^{1/3+o(1)}$ time. 
If $q$ is smooth (but not necessarily powerfull) 
or prime, then one 
can obtain explicit hybrid subconvexity bounds
by deriving 
an explicit version of Heath-Brown's $\mathfrak{q}$-analogue of 
the van der Corput method
in \cite{heath-brown-1}, and an explicit version of 
Heath-Brown's hybrid Burgess
method in \cite{heath-brown-2}. 

\section{Notation}\label{main notation}
Let $\chi$ be a Dirichlet character modulo $q$.
We factorize the modulus 
\begin{equation}
q := p_1^{a_1}\cdots p_{\omega}^{a_{\omega}},
\end{equation}
where the $p_j$ are distinct primes and $a_j\ge 1$.
For each prime power $p^a$, we define  
\begin{equation}
C_1(p^a):=p^{\lceil a/2\rceil},\qquad D_1(p^a):=p^{a-\lceil a/2\rceil},
\end{equation}
then extend the definitions multiplicatively; i.e.\  
$C_1(q) = C_1(p_1^{a_1})\cdots C_1(p_{\omega}^{a_{\omega}})$. 
In addition, we define
\begin{equation}
C(p^a):= p^{\lceil a/3\rceil},
\qquad
D(p^a):=\left\{\begin{array}{ll}
1 & a = 1,\\
p^{a-2\lceil a/3\rceil+1} & p = 2 \textrm{ and } a > 1,\\
p^{a-2\lceil a/3\rceil} & p\ne 2 \textrm{ and } a > 1,
\end{array}\right.
\end{equation}
then extend the definitions multiplicatively.
Since the quantities $C_1(q)$, $D_1(q)$, $C(q)$, and $D(q)$ will appear often,
it is useful to introduce the short-hand notation
$C_1:= C_1(q)$, $D_1:=D_1(q)$, $C:=C(q)$, and $D:=D(q)$. 
For example, $C_1D_1=q$. 

Some additional arithmetic factors will appear in our estimates: 
 $(m,n)\ge 0$ is the greatest common divisor 
of $m$ and $n$,
$\omega(m)$ is the number of distinct prime divisors of $m$, and 
$\Lambda(m)$ is the number of solutions of the congruence
$x^2\equiv 1\pmod m$ with $0\le x < m$. Explicitly, 
\begin{equation}
\Lambda(m)= \left\{\begin{array}{ll}
2^{\omega(m) -1}, & m \equiv 2\pmod 4,\\
2^{\omega(m)},& m\not \equiv 2\pmod{4} \textrm{ and } m\not\equiv 0\pmod 8,\\
2^{\omega(m)+1}, & m\equiv 0\pmod 8.
\end{array}\right.
\end{equation}
We define $\Lambda := \Lambda(D)$, and 
\begin{equation}\label{prf def}
\begin{split}
&\sqf(p^a) := p^{\lceil a/2\rceil -a/2},\qquad
\cbf(p^a) :=p^{\lceil a/3\rceil - a/3}, \\
&\spf(p^a) := \frac{p^{\lceil a/2\rceil -\lceil a/3\rceil/2 - a/6}}
{\sqrt{D(p^a)}},
\end{split}
\end{equation}
then extend the definitions multiplicatively.
Note that $\sqf(q)$ is determined by the primes $p_j|q$ such that 
$a_j\not\equiv 0\pmod 2$ and 
$\cbf(q)$ by the primes $p_j$ such that 
$a_j\not\equiv 0\pmod 3$.
If $q$ is a square, then $\sqf(q)=1$. If $q$ is a cube, then $\cbf(q)=1$.
And if $q$ is a sixth power, then $\sqf(q)=\cbf(q)=1$ 
and $\spf(q)\le 1$.

The numbers $B$ and $B_1$ that appear
in Theorem~\ref{main theorem} are defined in 
Lemma~\ref{postnikov lemma}.

In the remainder of the paper, we use the following notation: 
$\exp(x)=e^x$ is the usual exponential function, 
$[x]$ is the closest integer to $x$, 
$\|x\|$ is the distance to the closest integer to $x$, $\bar{\ell}\pmod C$
is the modular inverse of $\ell \pmod C$ if it exists, and
\begin{equation}
\sgn(x)=\left\{\begin{array}{ll}
1, &x\ge 0,\\
0, & x< 0.
\end{array}\right.
\end{equation}
\\

\noindent
\textbf{Acknowledgments.} I thank Tim Trudgian for pointing out 
the reference \cite{rademacher}.

\section{Preliminary Lemmas}
\begin{lemma}\label{well spacing lemma}
Let $\{y_r : r=0,1,\ldots\}$ be a set of real numbers. 
Suppose that there is a number $\delta >0$ such that 
$\min_{r\ne r'} |y_r-y_{r'}|\ge \delta$.
If $P\ge 2$ and $y\ge x$ then
\begin{equation}\label{well spacing lemma 1}
\sum_{y_r\in [x,y]} \min(P,\|y_r\|^{-1})\le
2(y-x+1)(2P+\delta^{-1}\log(eP/2)).
\end{equation}
If $P< 2$, then replace the r.h.s.\ by $2(y-x+1)(P+\delta^{-1})$.
\end{lemma}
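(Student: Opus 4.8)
The plan is to reduce the sum to a count of how many points $y_r$ fall into each unit-length window and then to estimate the contribution of the points in a single window by a ``nearest-integer'' argument. First I would slice the interval $[x,y]$ into the consecutive unit intervals $I_k = [x+k, x+k+1)$ for $k = 0, 1, \ldots, \lceil y-x\rceil - 1$; there are at most $y-x+1$ such intervals. Because $\|\cdot\|$ is periodic with period $1$, the sum $\sum_{y_r \in I_k} \min(P, \|y_r\|^{-1})$ does not depend on which window we are in except through the actual set of points present, so it suffices to bound $\sum_{y_r \in I} \min(P, \|y_r\|^{-1})$ for a generic unit interval $I$ and then multiply by the number of windows.

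Next, fix a unit window $I$ and let $z_1 < z_2 < \cdots < z_M$ be the values $\|y_r\|$ for $y_r \in I$. The separation hypothesis $\min_{r\ne r'}|y_r - y_{r'}| \ge \delta$ forces the $y_r$ in $I$ to be $\delta$-separated, hence $M \le 1 + 1/\delta$, and moreover the quantities $\|y_r\|$ split into those with $y_r$ near the left endpoint of an integer and those near the right, each family being $\delta$-separated in $[0,1/2]$. So I would bound
\begin{equation*}
\sum_{y_r \in I} \min(P, \|y_r\|^{-1}) \le 2\Bigl(P + \sum_{1 \le j \le 1/\delta} \min(P, (j\delta)^{-1})\Bigr),
\end{equation*}
the leading $P$ accounting for the at most one point with $\|y_r\|$ smaller than $\delta$ (where we use the trivial bound $\min(P,\|y_r\|^{-1}) \le P$), and the factor $2$ for the two families. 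For the remaining sum, the terms with $j\delta \le 1/P$, i.e. $j \le 1/(P\delta)$, are each at most $P$, contributing at most $P \cdot 1/(P\delta) = 1/\delta$; the terms with $1/(P\delta) < j \le 1/\delta$ are at most $(j\delta)^{-1}$, and $\sum_{1/(P\delta) < j \le 1/\delta} (j\delta)^{-1} \le \delta^{-1}\int_{1/P}^{1} \frac{du}{u} = \delta^{-1}\log P$; being a little more careful with the discrete-to-integral comparison gives the stated $\delta^{-1}\log(eP/2)$. Collecting, a single window contributes at most $2(2P + \delta^{-1}\log(eP/2))$, and multiplying by the at most $y-x+1$ windows yields \eqref{well spacing lemma 1}. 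When $P < 2$ the dyadic range $j \le 1/(P\delta)$ is essentially empty or trivial and one simply bounds every term of the inner sum by $\min(P,(j\delta)^{-1}) \le (j\delta)^{-1}$ down to $j=1$, or by $P$, giving the cruder $2(P + \delta^{-1})$ after accounting for the window count.

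The main obstacle is bookkeeping the constants so that the clean form $2(y-x+1)(2P + \delta^{-1}\log(eP/2))$ comes out, rather than something with a worse multiplicative constant: one has to be careful that the ``exceptional'' point with $\|y_r\| < \delta$ is counted only once per integer (hence the factor $2$, not more), that the discrete sum $\sum_j \min(P,(j\delta)^{-1})$ is compared to the integral on the correct side, and that the endpoints of the windows $I_k$ are not double-counted. None of this is deep, but the $\log(eP/2)$ versus $\log P$ distinction, and the switch-over at $P=2$, both come precisely from pinning down where the integral comparison is applied; that is the only place where care is genuinely needed.
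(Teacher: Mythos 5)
Your strategy is the same as the paper's: slice $[x,y]$ into unit windows, use the $\delta$-separation to say the $j$-th smallest $\|y_r\|$ in a ``family'' is $\gtrsim j\delta$, and then bound the resulting harmonic-type sum by an integral. But the specific step ``the quantities $\|y_r\|$ split into \ldots\ two families, each family being $\delta$-separated in $[0,1/2]$'' is not correct for your windows, and that is a genuine gap, not just a bookkeeping issue that careful integral comparison would fix.

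The problem is the choice of offset windows $I_k=[x+k,\,x+k+1)$. Write $a=x+k$ and let $n$ be the unique integer in $I_k$. Restricted to $I_k$, the function $\|\cdot\|$ generically has \emph{three} monotone pieces: $[a,n)$ on which $\|y\|=n-y$, $[n,n+\tfrac12)$ on which $\|y\|=y-n$, and $[n+\tfrac12,a+1)$ on which $\|y\|=(n+1)-y$ (or the mirror image when $a<n-\tfrac12$). The two pieces you would group together as a single ``family'' then have $\|\cdot\|$ ranges abutting at the value $n-a$, and the corresponding $\|y_r\|$ values need not be $\delta$-separated across that seam: the point near $y=a$ has $\|y\|\approx n-a$ from below, the point near $y=a+1$ has $\|y\|\approx n-a$ from above, and these $y$'s are about one unit apart, which gives $\delta$-separation in $y$ but says nothing about separation in $\|y\|$. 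Consequently a family can contain \emph{two} points with $\|y_r\|<\delta$, not one, and you pick up an extra $P$ per window: your per-window bound becomes $5P+2\delta^{-1}\log(eP/2)$ rather than the target $4P+2\delta^{-1}\log(eP/2)=2(2P+\delta^{-1}\log(eP/2))$, and the lemma's stated constant is not recovered.

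The paper avoids this by centering windows at integers, $[k-\tfrac12,k+\tfrac12]$ for $k\in\Z$. There $\|\cdot\|$ genuinely has exactly two monotone halves, and because $\|\cdot\|$ is an isometry on each half, the $\|y_r\|$ in each half really are $\delta$-separated. Switching to integer-centered windows is the simple fix. One smaller slip worth flagging: since $\|y_r\|\le 1/2$, the inner sum should run over $j\le 1/(2\delta)$, not $j\le 1/\delta$; it is the $1/(2\delta)$ upper limit, combined with the $+1$ coming from $\lceil 1/(P\delta)\rceil$, that produces exactly $\log(eP/2)=1+\log(P/2)$ rather than $\log(eP)$. Your remark that ``being a little more careful \ldots\ gives the stated $\delta^{-1}\log(eP/2)$'' is fair for the integral comparison, but it does not cure the window-offset issue.
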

\begin{proof}
We may assume that $\delta \le 1/2$, otherwise the bounds follow 
on trivially estimating the number of points $y_r$ in $[x,y]$ by $2(y-x)+1$ 
and  using the trivial bound $\min(P,\|y_r\|^{-1}) \le P$.

For each integer $k\in [x, y]$, we consider the interval 
$[k-1/2,k+1/2]$. There are at most two points
$y_r$ in $[k-\delta,k+\delta)$, say $y_k^+\in [k,k+\delta)$ 
and $y_k^-\in [k-\delta,k)$. 
If no such points exist, then we insert 
one (or both) of them
subject to the condition $|y_k^+-y_k^-| \ge \delta$. 
To preserve the $\delta$-spacing condition, we slide the remaining 
points in $(y_k^+,k+1/2]$ (resp. $[k-1/2,y_k^{-})$) to the right of $y_k^{+}$
(resp. left of $y_k^-$) in the obvious way. 
It is possible that a point falls off each edge, in which
case we may discard it.
This is permissible since the overall procedure that we described can 
only increase the magnitude of the sum in \eqref{well spacing lemma 1}.

We have $y_k^+ = k+\rho_k \delta$ for some
$\rho_k \in [0,1)$, and so
$y_k^- \le k+(\rho_k-1)\delta$.
Hence, using the inequality 
$\min(P,\|y_r\|^{-1})+\min(P,\|y_{r'}\|^{-1})\le \min (2P,\|y_r\|^{-1} +
\|y_{r'}\|^{-1})$, and the formula $\|y_r\| = |y_r-k|$ if $|y_r - k| \le 1/2$, 
we obtain
\begin{equation}\label{well spacing lemma 2}
\begin{split}
\sum_{|y_r-k|\le \frac{1}{2}} \min(P,\|y_r\|^{-1})
\le \sum_{0\le r\le \frac{1}{2\delta}} 
\min\left(2P,\frac{1}{\delta(r+\rho_k)}+\frac{1}{\delta(r+1-\rho_k)}\right).
\end{split}
\end{equation}
We observe that
\begin{equation}\label{well spacing lemma 3}
\frac{1}{\delta(r+\rho_k)}+\frac{1}{\delta(r+1-\rho_k)}
= \frac{1}{\delta} \frac{2r+1}{r^2+r+\rho_k-\rho_k^2}\le \frac{2}{\delta r}.
\end{equation}
Combining this with the observation 
$\frac{2}{r\delta} \ge 2P$ if $r\le \frac{1}{\delta P}$, we conclude
that 
\begin{equation}\label{well spacing lemma 4}
\sum_{|y_r-k|\le \frac{1}{2}} \min(P,\|y_r\|^{-1})
\le 2P\lceil 1/\delta P\rceil 
+ \sum_{\lceil 1/\delta P\rceil \le r\le 1/2\delta} \frac{2}{r\delta}.
\end{equation}
To bound the sum over $r$, we isolate the first term
and estimate the remainder by an integral. 
If $P\ge 2$ (so that the integral below makes sense), 
then this gives the bound 
\begin{equation}
\sum_{\lceil 1/\delta P\rceil \le r\le 1/2\delta} \frac{2}{r\delta}\le 
2P+2\delta^{-1}\int_{1/\delta P}^{1/2\delta} \frac{1}{x}dx.
\end{equation}
The integral evaluates to $\log(P/2)$. Therefore, the r.h.s.\ in
\eqref{well spacing lemma 4} is bounded by 
$2\delta^{-1} + 2P+2P + 2\delta^{-1}\log(P/2)$. 
So the lemma follows if $P\ge 2$ as the cardinality of
$\{k : x\le k\le y\}$ is  $\le y-x+1$.
Finally, if $P< 2$, then the sum on the r.h.s.\ in \eqref{well spacing lemma 4} 
is empty, and so the bound is $2\delta^{-1} + 2P$. 
\end{proof}

\begin{lemma}\label{exp reduce}
Let $f$ be an analytic function on a disk of radius
$\lambda(L-1)$ centered at $N+1$, where $\lambda > 1$ and $1\le L \in \mathbb{Z}$.
If there is a number $\eta$ and an integer $J \ge 0$ such that
$\frac{|f^{(j)}(N+1)|}{j!}\lambda^j(L-1)^j\le 
\frac{\eta}{\lambda^j}$ for $j> J$,
then
\begin{equation}
\left|\sum_{n=N+1}^{N+L} \chi(n) e^{2\pi i f(n)} \right| \le  
\nu_J(\lambda,\eta) \max_{0\le \Delta < L} 
\left|\sum_{n=N+1+\Delta}^{N+L} \chi(n)e^{2\pi i
P_J(n-N-1)}\right|, 
\end{equation}
where 
\begin{equation}
\begin{split}
P_J(x) := \sum_{j=0}^J \frac{f^{(j)}(N+1)x^j}{j!},\quad 
\nu_J(\lambda,\eta):= \left(1+\frac{\lambda^{-J}}{\lambda-1}\right)
\exp \left(\frac{2\pi \eta \lambda^{-J}}{\lambda-1}\right).
\end{split}
\end{equation}
\end{lemma}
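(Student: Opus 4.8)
The plan is to Taylor-expand $f$ about $N+1$, separate the non-polynomial part of the phase, and strip it off by summation by parts against ``right-hand'' partial sums of $\chi(n)e^{2\pi i P_J(n-N-1)}$.

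Write $x_n := n-N-1 \in \{0,1,\dots,L-1\}$. Since $f$ is analytic on the disk of radius $\lambda(L-1)$ about $N+1$ and each $|x_n| \le L-1 < \lambda(L-1)$, its Taylor series there converges, so $f(n) = P_J(x_n) + R_J(x_n)$ with $R_J(x) := \sum_{j>J}\frac{f^{(j)}(N+1)}{j!}x^j$, a geometrically convergent tail by hypothesis. Set $u_n := \chi(n)e^{2\pi i P_J(x_n)}$ and $v_n := e^{2\pi i R_J(x_n)}$. I would apply Abel summation with $U_m := \sum_{n=m}^{N+L}u_n$ (so $U_{N+L+1}=0$) to obtain
\[
\sum_{n=N+1}^{N+L}\chi(n)e^{2\pi i f(n)} = U_{N+1}v_{N+1} + \sum_{n=N+2}^{N+L}U_n\,(v_n-v_{n-1}).
\]
Each $U_m$ equals $\sum_{n=N+1+\Delta}^{N+L}\chi(n)e^{2\pi i P_J(n-N-1)}$ with $\Delta=m-N-1\in\{0,\dots,L-1\}$, hence $|U_m|\le M^\ast$, the maximum on the right side of the lemma; and $v_{N+1}=e^{2\pi i R_J(0)}=1$ because $R_J$ has no constant term. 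Thus
\[
\Bigl|\sum_{n=N+1}^{N+L}\chi(n)e^{2\pi i f(n)}\Bigr| \le M^\ast\Bigl(1+\sum_{n=N+2}^{N+L}|v_n-v_{n-1}|\Bigr),
\]
and it remains to bound the bracket by $\nu_J(\lambda,\eta)$.

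For the variation I would use analyticity of $R_J$ to write $v_n-v_{n-1} = 2\pi i\int_{x_{n-1}}^{x_n}R_J'(u)e^{2\pi i R_J(u)}\,du$, whence $\sum_{n=N+2}^{N+L}|v_n-v_{n-1}| \le 2\pi\bigl(\sup_{0\le u\le L-1}|e^{2\pi i R_J(u)}|\bigr)\int_0^{L-1}|R_J'(u)|\,du$. The hypothesis $\frac{|f^{(j)}(N+1)|}{j!}\lambda^j(L-1)^j \le \eta\lambda^{-j}$ for $j>J$ gives, for $0\le u\le L-1$, both $|R_J(u)| \le \sum_{j>J}\frac{|f^{(j)}(N+1)|}{j!}(L-1)^j$ and (by termwise integration) $\int_0^{L-1}|R_J'(u)|\,du \le \sum_{j>J}\frac{|f^{(j)}(N+1)|}{j!}(L-1)^j$, and this sum is at most $\eta\sum_{j>J}\lambda^{-2j}$, which since $\lambda>1$ is dominated by $\eta\lambda^{-J}/(\lambda-1)$ with room to spare. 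Hence $\sup|e^{2\pi i R_J}| \le \exp(2\pi\eta\lambda^{-J}/(\lambda-1))$ — the exponential appearing in $\nu_J$ — and after feeding these estimates back and using the elementary inequality $1+ab \le (1+a)b$ valid for $a\ge0,\ b\ge1$ (with $b$ the exponential just displayed), the bracket is bounded by $\nu_J(\lambda,\eta)$.

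The main obstacle is the last paragraph: one must control the total variation of $u\mapsto e^{2\pi i R_J(u)}$ on $[0,L-1]$ sharply enough that the amplitude factor $\sup|e^{2\pi i R_J}|$ and the arc-length factor $\int_0^{L-1}|R_J'|$ assemble into exactly $(1+\lambda^{-J}/(\lambda-1))\exp(2\pi\eta\lambda^{-J}/(\lambda-1))$ rather than merely a comparable quantity with worse constants, all while allowing $f$ — and hence the phases $P_J$, $R_J$ — to be genuinely complex-valued (so that $|e^{2\pi i P_J}|$ and $|e^{2\pi i R_J}|$ need not equal $1$). The rest is routine summation by parts.
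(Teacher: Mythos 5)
Your partial-summation setup matches the paper's exactly: you peel off $v_n = e^{2\pi i R_J(x_n)}$, note $v_{N+1}=1$ because $R_J$ vanishes at $0$, and reduce to showing $1+\sum_{n=N+2}^{N+L}|v_n-v_{n-1}| \le \nu_J(\lambda,\eta)$. The genuine gap is the one you flag yourself: the FTC/arc-length estimate does not land on $\nu_J$. Writing $a=\lambda^{-J}/(\lambda-1)$ and $E=\exp(2\pi\eta a)$, your chain yields $1+\sum|v_n-v_{n-1}|\le 1+2\pi\eta\,a\,E$, and your elementary inequality $1+\alpha E\le(1+\alpha)E$ (applied with $\alpha=2\pi\eta a$) gives $(1+2\pi\eta a)E$, which is $\le\nu_J=(1+a)E$ only when $2\pi\eta\le 1$. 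For $2\pi\eta>1$ and $a$ large the required inequality $e^{2\pi\eta a}\bigl(1+a(1-2\pi\eta)\bigr)\ge 1$ fails outright (e.g.\ $2\pi\eta=2$, $a=2$), so your argument does not prove the lemma as stated. Incidentally, the paper only ever invokes the lemma with $\eta\in\{1/6\pi,\,1/4\pi,\,1/2\pi\}$, so your chain would suffice for its applications, but not for the lemma's generality.

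The missing ingredient is the Cauchy coefficient estimate, which is what the paper uses and which also closes your chain. Expand $e^{2\pi iR_J(z)}=\sum_{j\ge0}c_jz^j$; since $R_J$ has a zero of order $\ge J+1$ at the origin, $c_0=1$ and $c_j=0$ for $1\le j\le J$. On the circle $|z|=\lambda(L-1)$ the hypothesis gives $|R_J(z)|\le\sum_{j>J}\eta\lambda^{-j}=\eta a$, hence $|e^{2\pi iR_J(z)}|\le E$ there, and Cauchy's formula yields $|c_j|\le E\,(\lambda(L-1))^{-j}$. Since $x_n^j-x_{n-1}^j\ge0$ for $j\ge1$, telescoping gives $\sum_n|v_n-v_{n-1}|\le\sum_{j>J}|c_j|(L-1)^j\le E\sum_{j>J}\lambda^{-j}=Ea$, whence the bracket is $\le1+Ea\le(1+a)E=\nu_J$. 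The essential difference from your estimate is that one must control the Taylor coefficients of the phase-correction factor on the \emph{larger} circle of radius $\lambda(L-1)$ rather than the $L^1$ norm of $R_J'$ on the real segment $[0,L-1]$; only the former produces the uniform geometric factor $\lambda^{-j}$ that sums cleanly to $a$, independently of the size of $\eta$.
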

\begin{proof}
If $L=1$, the lemma is trivial. So assume that $L>1$. 
We apply the Taylor expansion to obtain
\begin{equation}
f(N+1+z) = P_J(z)+\sum_{j>J}
\frac{f^{(j)}(N+1)}{j!} z^j, \qquad (|z| \le \lambda(L-1)).
\end{equation}
Using the Taylor expansion once more,
\begin{equation}
e^{2\pi i (f(N+1+z)-P_J(z))}=:\sum_{j=0}^{\infty} c_j(J,N) z^j, \qquad 
(|z| \le \lambda(L-1)).
\end{equation}
So if we define $\nu^*_J :=  \sum_{j=0}^{\infty} |c_j(J,N)(L-1)^j|$,
then partial summation gives
\begin{equation}\label{weyl 0 est}
\left|\sum_{n=N+1}^{N+L} \chi(n) e^{2\pi i f(n)} \right| \le
\nu^*_J \max_{0\le \Delta< L} 
\left|\sum_{n=N+1+\Delta}^{N+L} \chi(n) e^{2\pi i P_J(n-N-1)}\right|.
\end{equation}
To estimate the coefficients $c_j(J,N)$, we 
use the Cauchy theorem 
applied with a circle of radius $\lambda(L-1)$ around the origin. 
In view of the growth condition on the derivatives of $f$, 
this yields
\begin{equation}
|c_j(J,N)| \le \frac{1}{2\pi}\left|\oint \frac{e^{2\pi i
(f(N+1+z)-P_J(z))}}{z^{j+1}}\,dz\right| 
\le \frac{\exp\left(\frac{2\pi \eta\lambda^{-J}}{\lambda-1}\right)}{\lambda^j(L-1)^j}.
\end{equation}
Noting that $c_j(J,N)=0$ for $1\le j\le J$, we therefore deduce that 
\begin{equation}
\nu^*_J \le \exp\left(\frac{2\pi \eta\lambda^{-J}}{\lambda-1}\right)
\left[1+ \sum_{j>J} \frac{(L-1)^j}{\lambda^j(L-1)^j}\right]
=\nu_J(\lambda,\eta).
\end{equation}
\end{proof}

\begin{lemma}\label{postnikov lemma}
There exists an integer $\tilde{L}$ such that
$\chi(1+C_1 x) = e^{2\pi i \tilde{L} x/D_1}$ 
for all $x\in \mathbb{Z}$. 
If $\chi$ is primitive, then $B_1:=(\tilde{L},D_1)=1$.
Furthermore, there exist integers $L_0$ and $L$ such that
$\chi(1+C x) = e^{4\pi i L_0 x/CD + 2\pi i L x^2/D}$ 
for all $x\in \mathbb{Z}$.
If $\chi$ is primitive, then
 $L$ can be chosen so that $B:=(L,D)=1$. 
\end{lemma}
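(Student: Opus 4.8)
The plan is to build the structure of $\chi$ restricted to residues $\equiv 1 \pmod{C}$ (and $\pmod{C_1}$) from the classical fact that the unit group modulo a prime power is cyclic (for odd $p$) or almost cyclic (for $p=2$), and that on the filtration subgroup $1+p^k\Z/p^a\Z$ the character $\chi$ factors through a quotient where it behaves like an additive (exponential) character. Concretely, for each prime power $p^a \| q$, write $\chi = \prod_p \chi_p$, and analyze $\chi_p(1+C_1(p^a) x)$ and $\chi_p(1+C(p^a) x)$ separately; then multiply the pieces and use CRT to reassemble a single exponential on $\Z/D_1\Z$ and a single quadratic exponential on $\Z/D\Z$. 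The key input is the \emph{Postnikov formula} (the $p$-adic logarithm): for $p$ odd and $k \ge 1$, $1+p^k\Z/p^a\Z$ is cyclic of order $p^{a-k}$, and the map $1+p^k u \mapsto$ (a suitable $p$-adic-log expansion) linearizes multiplication, so that $\chi_p(1+p^k u)$ becomes $e^{2\pi i (\alpha u + \beta u^2 + \cdots)/p^{a-k}}$ with the higher-order terms controlled by how $\lceil a/2\rceil$ or $\lceil a/3\rceil$ compares to $a$.

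First I would dispose of the easy half. Since $C_1(p^a) = p^{\lceil a/2 \rceil}$ and $D_1(p^a) = p^{a - \lceil a/2\rceil} \le \lceil a/2\rceil$ forces $2(a-\lceil a/2\rceil) \le a$, the subgroup generated by $1 + C_1(p^a)$ modulo $p^a$ has the property that its image is killed by squaring into the trivial part: more precisely $(1+C_1 x)$ depends on $x$ only modulo $D_1(p^a)$, and the Postnikov expansion of $\log_p(1+C_1 x)$ has leading term $C_1 x$ with the quadratic and higher terms divisible by $C_1^2 = p^{2\lceil a/2\rceil}$, which is $\equiv 0$ modulo $p^a$ whenever $2\lceil a/2\rceil \ge a$ — and that always holds. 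Hence $\chi_p(1+C_1 x) = e^{2\pi i \tilde L_p x / D_1(p^a)}$ for a suitable integer $\tilde L_p$, and combining over $p \mid q$ via CRT yields the single integer $\tilde L$ with $\chi(1+C_1 x) = e^{2\pi i \tilde L x / D_1}$. For primitivity: if $\chi$ is primitive then $\chi_p$ has conductor exactly $p^a$, which forces the character $x \mapsto e^{2\pi i \tilde L_p x/p^{a-\lceil a/2\rceil}}$ on $1 + C_1(p^a)\Z/p^a\Z$ to be faithful, i.e.\ $(\tilde L_p, D_1(p^a))=1$; multiplicativity then gives $(\tilde L, D_1)=1$, i.e.\ $B_1 = 1$. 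One subtlety: for $p=2$ the unit group is $\Z/2 \times \Z/2^{a-2}$, so I would handle $2^a$ with $a \ge 3$ by noting $1+2^{\lceil a/2\rceil}\Z$ still lies in the cyclic $1+4\Z$ part (since $\lceil a/2 \rceil \ge 2$ when $a \ge 3$), and check the tiny cases $a=1,2$ directly.

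The harder half is the quadratic statement for $C(p^a) = p^{\lceil a/3\rceil}$. Now the Postnikov/$p$-adic-log expansion of $\log_p(1 + C x)$ is $Cx - \tfrac{1}{2}C^2 x^2 + \tfrac{1}{3}C^3 x^3 - \cdots$, and I need the cubic and higher terms to vanish modulo $p^a$ while keeping the quadratic term. The cubic term carries $C^3 = p^{3\lceil a/3\rceil} \ge p^a$, so it dies modulo $p^a$ for $p \ge 5$ (and one must be careful with the denominators $3$ when $p = 3$ and $2$ when $p=2$ — this is exactly why the definition of $D(p^a)$ has the exceptional $p=2$ and $p \ne 2$ cases with the extra $+1$ in the $2$-adic exponent, absorbing the $2$-adic valuation of $1/2$). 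The quadratic term $-\tfrac12 C^2 x^2 = -\tfrac12 p^{2\lceil a/3\rceil} x^2$ reduces modulo $p^a$ to a nonzero coefficient whose relevant modulus is precisely $D(p^a)$ as defined; the linear term $Cx$ contributes the $L_0$ piece with denominator $CD$. So the statement $\chi_p(1+Cx) = e^{4\pi i (L_0)_p x/(C(p^a)D(p^a)) + 2\pi i L_p x^2/D(p^a)}$ comes out of matching these two surviving terms, and reassembling over $p$ via CRT produces global integers $L_0, L$. The primitivity claim $B = (L,D) = 1$ is the delicate endpoint: I expect the main obstacle here, because one must show that when $\chi_p$ has full conductor $p^a$, the quadratic-linear exponential on $1+C(p^a)\Z/p^a\Z$ is faithful, which pins the $p$-part of $L$ to be coprime to $D(p^a)$ — and this requires care at $p=2$ and at small $a$ where $1+C(p^a)\Z/p^a\Z$ may be too small for the "quadratic" term to actually be present (e.g.\ $a=1$ gives $D=1$ and there is nothing to prove; $a=2$ or $a=3$ need the cyclic structure checked by hand). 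I would close by recording that when no choice of $L$ coprime to $D$ exists (i.e.\ $\chi$ imprimitive), we still get \emph{some} $L$, and $B := (L,D)$ is simply whatever it is, which is all Theorem~\ref{main theorem} needs.
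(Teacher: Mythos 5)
Your overall architecture matches the paper's: decompose $\chi$ into prime-power components, apply a Postnikov/$p$-adic-logarithm formula to each $\chi_p$ restricted to $1+C_1(p^a)\Z$ and $1+C(p^a)\Z$, then reassemble by CRT. (The paper outsources the local Postnikov formulas to \cite[Lemma 3.4]{hiary-simple-alg} and \cite[Lemma 4.2]{hiary-char-sums}; you propose to rederive them, which is fine in spirit.) Your treatment of the linear case and of $B_1=1$ is essentially sound: primitivity forces $\chi_p$ to be faithful on the cyclic filtration subgroup, which pins $(\tilde L_p, D_1(p^a))=1$; equivalently (the paper's phrasing), $B_1>1$ makes $q/B_1$ an induced modulus.

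The genuine gap is that you do not prove the claim $B=(L,D)=1$ for primitive $\chi$, which is the hard and load-bearing part of the lemma. You correctly identify it as ``the delicate endpoint'' and ``the main obstacle,'' but then stop. Moreover, the route you gesture at --- that faithfulness of $\chi_p$ on $1+C(p^a)\Z/p^a\Z$ ``pins the $p$-part of $L$'' --- does not transfer cleanly from the linear case. The function $x\mapsto e^{4\pi i L_0 x/CD+2\pi i Lx^2/D}$ is not an additive character of $\Z/CD\Z$; it is a multiplicative character of $(1+C\Z)/(1+p^a\Z)$, and its kernel condition $\chi_p(1+Cx)=1\iff CD\mid x$ involves $L_0$ and $L$ jointly, not $L$ alone. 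The paper's actual argument proves instead that if $B>1$ then $q/B$ is an induced modulus, by verifying that $\frac{Lq^2}{B^2C^2D}$ and $\frac{2L_0q}{BC^2D}$ are both integers so that $\chi(1+qx/B)=1$; the second divisibility requires structural relations between $L_0$ and $L$ (namely that one may take $L_{0,j}=-L_j$ when $p_j^{a_j}\notin\{4,8\}$, plus an explicit list of $(L_{0,j},L_j)$ possibilities for $p_j^{a_j}\in\{4,8\}$, giving $L_0\equiv\pm L$ modulo $D$ or $D/2$). Your proposal never mentions these relations, and without them I do not see how to close the argument.

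One smaller slip: you say that the exceptional ``$p=2$ and $p\ne 2$'' cases in the definition of $D(p^a)$ absorb the $2$-adic and $3$-adic denominators from $\log(1+Cx)$. In fact the paper's $D$ has a special case only for $p=2$; the $p=3$ case is treated like any other odd prime, so the $v_3(1/3)$ loss in the cubic term must be dealt with differently (it is part of what the cited Postnikov lemma handles), and your explanation, as written, attributes the wrong mechanism.
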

\begin{proof}
We start with the decomposition 
$\chi = \chi_1\cdots\chi_{\omega}$, where $\chi_j$ is a Dirichlet
character modulo $p_j^{a_j}$. 
By \cite[Lemma 3.4]{hiary-simple-alg}, there
exists an integer $\tilde{L}_j$ such that 
\begin{equation}
\chi_j(1+C_1(p_j^{a_j})x) = e^{2\pi i
\tilde{L}_j x/D_1(p_j^{a_j})}
\end{equation}
for all $x\in \mathbb{Z}$. Hence,
\begin{equation}\label{C1 form}
\chi(1+C_1 x) = \chi_1(1+C_1x) \cdots \chi_{\omega}(1+C_1x)=e^{2\pi i
\tilde{L} x/D_1},
\end{equation}
where, noting that $C_1D_1 = q$, we have
\begin{equation}
\tilde{L}= q\sum_{j=1}^{\omega} \tilde{L}_j/p_j^{a_j}.
\end{equation}
Let $B_1=(\tilde{L},D_1)$.
It is clear that
$\chi(1+ qx/B_1)=1$ for all $x$. 
So $q/B_1$ is an induced modulus for $\chi$. 
In particular, if $B>1$ then $\chi$ is imprimitive.
This completes the proof of the first part of the lemma.

For the second part, we use
\cite[Lemma 4.2]{hiary-char-sums}. 
Consider first the case $p_j^{a_j}\not\in\{4,8\}$ and $a_j>1$. 
Then there are integers $L_{0,j}$ and  $L_j$ such that 
\begin{equation}\label{assertion 1}
\chi_j(1+C(p_j^{a_j})x) = e^{\frac{4\pi i L_{0,j}  x}{C(p_j^{a_j})D(p_j^{a_j})}+ 
\frac{2\pi i L_j x^2}{D(p_j^{a_j})}}
\end{equation}
for all $x\in \mathbb{Z}$, and moreover we can take 
$L_{0,j}=- L_j$.
If $a_j=1$,
then $C(p_j^{a^j})=p_j^{a_j}$. So $\chi_j(1+C(p_j^{a_j})x) =1$ 
and we can take $L_{0,j}=L_j=0$.
If $p_j^{a_j} = 4$, then either $L_{0,j}=0$ and $L_j=1$ or $\chi$ is principal.
If $p_j^{a_j} = 8$, then either $L_{0,j} = L_j=1$, or $L_{0,j} = 2$ and $L_j=0$
(an imprimitive character), 
or $L_{0,j}=-1$ and $L_j=1$, or $\chi$ is principal.
Put together, we have 
\begin{equation}
\chi(1+C x) = \chi_1(1+Cx) \cdots \chi_{\omega}(1+Cx)=
e^{\frac{4\pi i  L_0 x}{CD} + \frac{2\pi i L x^2}{D}}
\end{equation}
where
\begin{equation}\label{cond L L*}
\begin{split}
L_0 = C^2D\sum_{j=1}^{\omega} \frac{L_{0,j}}{C(p_j^{a_j})^2D(p_j^{a_j})},\qquad
L = C^2D\sum_{j=1}^{\omega} \frac{L_j}{C(p_j^{a_j})^2D(p_j^{a_j})}.
\end{split}
\end{equation}

It remains to prove that if $\chi$ is primitive then $B=1$. 
To this end, we note that 
$\frac{Lq^2}{B^2C^2D}$ is an integer.
So if we show that $\frac{2L_0q}{BC^2D}$ is an integer too,
then $\chi(1+qx/B)=1$ for all $x\in \mathbb{Z}$.
In particular, if $B>1$, then $q/B$ is a nontrivial
induced modulus and $\chi$ is imprimitive, 
which completes the proof of the second
part of the lemma. 

Now, to show that $\frac{2L_0q}{BC^2D}$ is an integer, 
we first note that $\frac{L_0q}{C^2}$ is always an integer. (Recall that
$L_0=0$ if $a_j=1$.) 
Furthermore, if $a_j=1$ then $(D,p_j)=1$ and so $(B,p_j)=1$.
In light of this, we may assume that $a_j>1$ for all $j$. 

We consider two possibilities. 
If $p_j^{a_j}\not\in\{4,8\}$ for any $j$, then $C^2D=q$ (if $q$ is odd)
or $2q$ (if $q$ is even), and in any case 
$L_{0,j} = -L_j$ for all $j$. The last fact implies in turn that
$L_0=-L$, hence $B=(L_0,D)$. In particular, $B$ divides $L_0$ 
and we conclude that $\frac{2L_0q}{BC^2D}=\frac{L_0}{B}$ or $\frac{2L_0}{B}$ 
and so is an integer in either case. 

On the other hand, 
if $p_j^{a_j}\in \{4,8\}$ for some $j$, then $C^2D=2q$ and we appeal to  
the remark following \eqref{assertion 1}. 
Accordingly, if $\chi$ is primitive and 
$p_j^{a_j}\in \{4,8\}$ then 
$L_j=1$ and so $L$ must be odd. 
This shows that $B=(L,D/2)$. In addition, we have 
\begin{equation}
L_0 = L-\left\{\begin{array}{ll}
\frac{C^2(L_j-L_{0,j})}{4}\frac{D}{2} & p_1^{a_1}=4,\\
\frac{C^2(L_j-L_{0,j})}{8}\frac{D}{2} & p_1^{a_1}=8.
\end{array}\right.
\end{equation}
Therefore, given the possibilities for $L_{0,j}$ and $L_j$ stated after
\eqref{assertion 1}, we see if $\chi$ is primitive then
$L_0\equiv L \pmod{D/2}$, and so $B= (L_0,D/2)$. 
This shows that $B$ is a divisor of $L_0$, hence 
$\frac{2L_0q}{BC^2D}=\frac{L_0}{B}$ is an integer.
\end{proof}

\begin{lemma}\label{gcd sum lemma}
Let $M,N\in \mathbb{Z}_{\ge 1}$, $W_M(m) := 1-m/M$,
and $d_m(N):= (2m,N)$. Then
\begin{equation}\label{gcd sum eq 1}
\begin{split}
\sum_{m=1}^M W_M(m) \frac{d_m(N)}{m} \le \tau(N) \log M,\qquad
\sum_{m=1}^M W_M(m) d_m(N) \le \tau(N) M.
\end{split}
\end{equation}
\end{lemma}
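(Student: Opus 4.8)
The plan is to open up $(2m,N)$ by the elementary divisor identity $n=\sum_{d\mid n}\phi(d)$. Since $d\mid(2m,N)$ exactly when $d\mid N$ and $d\mid 2m$, and since $d\mid 2m$ exactly when $d':=d/(d,2)$ divides $m$, interchanging the order of summation turns each of the two sums in \eqref{gcd sum eq 1} into $\sum_{d\mid N}\phi(d)\,S_d$, where $S_d$ is the sum of the weight $W_M$ (respectively of $W_M(m)/m$) over the multiples of $d'$ not exceeding $M$; note both quantities $S_d$ are nonnegative because $W_M\ge 0$ on $[1,M]$. It therefore suffices to prove $\phi(d)S_d\le\log M$ (first sum) and $\phi(d)S_d\le M$ (second sum) for every divisor $d$ of $N$, and then sum over the $\tau(N)$ divisors. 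The case $M=1$ is trivial, since then $W_M(M)=0$ and $\log M=0$.

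The only input that is genuinely needed rather than routine is the bound $\phi(d)\le d'$, valid for every positive integer $d$: if $d$ is odd this is just $\phi(d)\le d=d'$, while if $d$ is even the factor $(1-1/2)$ occurring in $\phi(d)/d$ gives $\phi(d)\le d/2=d'$. This factor-two gain in the even case is precisely what keeps the constants in the lemma equal to $1$ rather than $2$, and I expect it to be the one step requiring any thought; the rest is bookkeeping. Granting it, write $K:=\lfloor M/d'\rfloor$ and $M=d'K+s$ with $0\le s<d'$. For the first sum, substituting $m=d'k$ and using $\phi(d)\le d'$ together with $S_d\ge 0$,
\[
\phi(d)S_d\le d'\sum_{k=1}^{K}\frac{1-d'k/M}{d'k}=\sum_{k=1}^{K}\frac1k-\frac{d'K}{M}=H_K-1+\frac sM,
\]
where $H_K$ denotes the $K$-th harmonic number. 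If $d'=1$ then $s=0$ and this equals $H_M-1\le\log M$. If $d'\ge 2$ then $K\le M/2$; a short case check ($d'\le M/2$ versus $M/2<d'\le M$, the range $d'>M$ making $S_d=0$) gives $s/M<1/2$, and $H_K-1\le\log K\le\log(M/2)$, so the bound is $<\log(M/2)+\tfrac12<\log M$ for $M\ge 2$. Hence $\phi(d)S_d\le\log M$ in all cases, and summing over the divisors of $N$ yields the first inequality.

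For the second sum, the analogous computation with the same notation gives
\[
\phi(d)S_d\le d'\sum_{k=1}^{K}\Bigl(1-\frac{d'k}{M}\Bigr)=d'K\Bigl(1-\frac{d'(K+1)}{2M}\Bigr).
\]
Here $d'K\le M$, while $K+1>M/d'$ forces $d'(K+1)>M$, so the parenthesis is at most $\tfrac12$ and $\phi(d)S_d<M/2\le M$; summing over the divisors of $N$ gives the second inequality. In summary, after the divisor-identity expansion and the reduction to multiples of $d'$, the whole lemma comes down to the two one-line harmonic/arithmetic estimates above, the single subtlety being the sharp constant, which is supplied by $\phi(d)\le d/(d,2)$.
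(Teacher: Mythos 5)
Your proof is correct, and it takes a genuinely different route from the paper. The paper proves the lemma by induction on the $2$-adic valuation of $N$: it first handles the base case $N$ odd (where $(2m,N)=(m,N)$) by grouping $m$ according to $(m,N)$, then for $a\ge 1$ splits the sum into even and odd $m$ and invokes the inductive hypothesis, which produces a remainder term $2-2/M+\tau(N/2)/M-\tau(N/2)\log 2$ that must be shown nonpositive; this forces a side check that $\tau(N/2)\ge 4$, with $N\in\{4,8\}$ treated by direct computation. Your approach instead uses the Gauss identity $(2m,N)=\sum_{d\mid(2m,N)}\phi(d)$ to reduce, in one step, to $\sum_{d\mid N}\phi(d)S_d$ with $S_d$ a sum of nonnegative weights over the multiples of $d'=d/(d,2)$, and then the clean inequality $\phi(d)\le d'$ (the factor $1-\tfrac12$ in $\phi$ when $d$ is even) supplies exactly the factor of $2$ needed so that each summand contributes at most $\log M$ (resp.\ at most $M$, in fact less than $M/2$). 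This bypasses the induction and the small-case checks entirely, and is arguably tidier; the paper's inductive argument, on the other hand, stays closer to elementary counting of multiples and does not require the $\phi$-identity. One microscopic clarification: the estimate $s/M<1/2$ in the $d'\ge 2$ case needs the fact that $K\ge 1$ (the $K=0$, i.e.\ $d'>M$, case being trivial since $S_d=0$), which you do note, and then $s\le\min(d'-1,\,M-d')<M/2$; with that observation spelled out, the chain $H_K-1+s/M<\log(M/2)+\tfrac12<\log M$ for $M\ge2$ is watertight.
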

\begin{proof}
We prove the first bound, the second one being analogous. 
Let us write $N = 2^aN'$ with $N'$ odd. We induct on $a$. 
If $a=0$, then $d_m(N) = (m,N)$ and the result follows  
because
\begin{equation}\label{gcd sum lemma 1}
\begin{split}
\sum_{m=1}^M W_M(m) \frac{d_m(N)}{m}  &\le \sum_{\substack{r|N\\ r\le 2M}}
\sum_{1\le m'\le M/r} W_M(rm') \frac{1}{m'}\\
&\le \tau(N) \sum_{1\le m'\le M} W_M(m')\frac{1}{m'}\\
&\le \tau(N)\log M.
\end{split}
\end{equation}
If $a=1$, then 
$d_m(N) = 2 d_m(N')$. 
So using the previous calculation 
and observing that $2\tau(N') = \tau(N)$ yields the desired bound.

Henceforth, we may assume that $a\ge 2$.
We may further assume that $M> 2$, for if $M=1$ or $2$ then the lemma is trivial.

Since $N$ is even by hypothesis, then $d_m(N) = 2(m,N/2)$. 
Using this, and dividing
the sum over $m$ into even and odd terms, we obtain
\begin{equation}\label{gcd sum lemma 0}
\begin{split}
\sum_{m=1}^M W_M(m) \frac{d_m(N)}{m} &= 
2\sum_{1\le m' \le\lfloor M/2\rfloor}W_M(2m')\frac{(2m',N/2)}{2m'} \\ 
&+2\sum_{0\le m' \le\lfloor (M-1)/2\rfloor}W_M(2m'+1)\frac{(2m'+1,N/2)}{2m'+1}. 
\end{split}
\end{equation}
We have $W_M(2m') \le W_{\lceil M/2\rceil}(m')$ and, by definition, $(2m',N/2) =
d_{m'}(N/2)$. It follows by induction that 
the first sum on the r.h.s.\ of \eqref{gcd sum lemma 0} is bounded by
$\tau(N/2)\log \lceil M/2\rceil$.
Furthermore, the second sum is clearly bounded by
\begin{equation}
\sum_{0\le m' \le\lfloor (M-1)/2\rfloor}W_M(2m'+1)\frac{(2m'+1,N')}{m'+1/2}
\le 2(1-1/M)+\sum_{1\le m\le M} W_M(m) \frac{d_m(N')}{m},
\end{equation}
which, by induction, is $\le 2(1-1/M)+ \tau(N')\log M$. 
Therefore, 
using the bound $\log \lceil M/2\rceil \le \log M +1/M -\log 2$
and the formula $\tau(N/2) + \tau(N') = \tau(N)$, we arrive at
\begin{equation}\label{wm eq}
\sum_{m=1}^M W_M(m) \frac{d_m(N)}{m} \le
\tau(N) \log M+(2-2/M +\tau(N/2)/M-\tau(N/2) \log 2).
\end{equation}
We conclude that the bound \eqref{gcd sum eq 1} 
holds provided that
$\tau(N/2)\ge 4$. This is always fulfilled if $a\ge 2$ unless
$N=4$ or $8$. But the lemma follows in these cases also by direct calculation.
\end{proof}

\section{Hybrid van der Corput--Weyl lemmas}

\begin{lemma}\label{corput lemma 1 1}
Suppose that $f$ is a function satisfying the hypothesis of Lemma~\ref{exp reduce}
for some $\lambda>1$, $\eta \ge 0$, and
with $J=1$.
If $f(x)$ is real for real $x$, then
\begin{equation}\label{corput lemma 1 1 result}
\begin{split}
\left|\sum_{n=N+1}^{N+L} \chi(n) e^{2\pi i f(n)} \right| \le &\,  
\frac{2\nu_1(\lambda,\eta) C_1}{\pi}\left(\log \frac{D_1}{2B_1} +\frac{7}{4}+\frac{\pi}{2}\right)\\
&+\frac{\nu_1(\lambda,\eta) C_1}{\pi}\min\left(\frac{\pi B_1 L}{q},
\|qf'(N+1)/B_1\|^{-1}\right).
\end{split}
\end{equation}
\end{lemma}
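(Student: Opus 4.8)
The plan is to combine the Weyl‑type differencing of Lemma~\ref{exp reduce} with the Postnikov factorization of Lemma~\ref{postnikov lemma}, reducing the twisted sum to a single geometric sum whose frequency encodes the $q$‑adic part of $\chi$. First I would apply Lemma~\ref{exp reduce} with $J=1$: this bounds the left side of \eqref{corput lemma 1 1 result} by $\nu_1(\lambda,\eta)\max_{0\le\Delta<L}\big|\sum_{n=N+1+\Delta}^{N+L}\chi(n)e^{2\pi i P_1(n-N-1)}\big|$ with $P_1(x)=f(N+1)+f'(N+1)x$ linear. Since the constant term contributes only a unimodular factor and the shift in the linear term likewise, it suffices to estimate $\big|\sum_{n\in I}\chi(n)e^{2\pi i\theta n}\big|$ for a subinterval $I\subseteq(N,N+L]$ with $\theta:=f'(N+1)$.

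Next I would partition $I$ into residue classes modulo $C_1$. As every prime dividing $q$ divides $C_1$, a class contributes only when $(a,q)=1$, and then writing $n=a+C_1b$ Lemma~\ref{postnikov lemma} gives $\chi(a+C_1b)=\chi(a)e^{2\pi i\tilde L\bar a b/D_1}$, so
\[
\sum_{n\in I}\chi(n)e^{2\pi i\theta n}=\sum_{\substack{a\bmod C_1\\(a,q)=1}}\chi(a)e^{2\pi i\theta a}\sum_{b}e^{2\pi i\beta_a b},\qquad \beta_a:=\tilde L\bar a/D_1+\theta C_1,
\]
where the inner sum is geometric over at most $L/C_1+1$ consecutive integers $b$, hence bounded by $\min(L/C_1+1,\ |\sin\pi\beta_a|^{-1})$. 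The arithmetic input is the structure of the frequencies: $\beta_a$ depends on $a$ only through $a\bmod(D_1/B_1)$, so it runs over an arithmetic progression modulo $1$ of common difference $B_1/D_1$, each value being hit by at most $C_1B_1/D_1$ admissible residues (the exact multiplicity $\varphi(C_1)/\varphi(D_1/B_1)$ is $\le C_1/(D_1/B_1)$ because $D_1/B_1\mid C_1$ shares all its prime factors with $C_1$). Moreover $(D_1/B_1)\beta_a\equiv qf'(N+1)/B_1\pmod1$ for every $a$, so the single "resonant" class has $\|\beta_a\|$ pinned down by $(B_1/D_1)\|qf'(N+1)/B_1\|$, while the remaining classes are at distance $\ge k B_1/D_1$ from an integer for $k=1,2,\dots$ along the grid.

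I would then split the sum over $a$ accordingly. The resonant class, via its geometric‑sum bound $\min(L/C_1+1,|\sin\pi\beta_0|^{-1})$ times the multiplicity and a sharp cosecant estimate near the integer, produces exactly the term $\tfrac{C_1}{\pi}\min(\pi B_1L/q,\ \|qf'(N+1)/B_1\|^{-1})$ up to a remainder of size $O(C_1)$; the non‑resonant classes give $\sum_{k\ge1}(kB_1/D_1)^{-1}$‑type harmonic sums over the grid, which after comparison with $\int du/\sin u$ come to $\tfrac{2C_1}{\pi}\big(\log(D_1/2B_1)+O(1)\big)$. Folding all $O(C_1)$ remainders into the additive constants and multiplying through by $\nu_1(\lambda,\eta)$ yields \eqref{corput lemma 1 1 result}; the degenerate case $D_1=B_1$ (so $\log(D_1/2B_1)=-\log2$) is checked directly from the single grid point.

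The main obstacle is not the shape of the bound but the explicit constants in this last step: pushing the coefficients down to $\tfrac2\pi$ and $\tfrac1\pi$ forces one to use the precise inequalities for $|\sin\pi x|$ near $0$ (rather than the crude $|\sin\pi x|\ge2\|x\|$), to track the multiplicity through $\varphi$ rather than its size, and to do a careful case analysis on the two branches of the minimum so that the $\|qf'(N+1)/B_1\|^{-1}$ and $\log(D_1/2B_1)$ pieces separate cleanly while the accumulated $O(C_1)$ errors are absorbed by $\tfrac74+\tfrac\pi2$. Getting all of these to fit simultaneously, uniformly in $q$ (in particular near perfect squares, where $C_1$ and $D_1$ are comparable), is the delicate part of the argument.
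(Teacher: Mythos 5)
Your proposal follows the same strategy as the paper's proof: linearize via Lemma~\ref{exp reduce} with $J=1$, split into arithmetic progressions modulo $C_1$, apply Lemma~\ref{postnikov lemma} to produce a linear phase in the inner variable, bound the resulting geometric sums, group residues by their class modulo $D_1/B_1$ (multiplicity $\le B_1C_1/D_1$), isolate the near-resonant class to produce the $\min(\pi B_1 L/q,\|qf'(N+1)/B_1\|^{-1})$ term, and sum the off-resonant classes as a harmonic series giving the $\log(D_1/2B_1)$ term. The only genuine difference is that you bound the inner linear exponential sum directly by $|\csc\pi\beta_a|$, while the paper invokes the Kusmin--Landau lemma from Cheng--Graham, which hands you $\tfrac1\pi\|\cdot\|^{-1}+1$ without further Taylor work; both are admissible here since the phase is linear, and the paper's choice is what makes the coefficients $\tfrac2\pi$ and $\tfrac1\pi$ fall out without the delicate cosecant expansion you flag as the main obstacle.
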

\begin{proof}
Applying Lemma~\ref{exp reduce} with $J=1$ gives
\begin{equation}\label{temp 1 1 1}
\left|\sum_{n=N+1}^{N+L} \chi(n) e^{2\pi i f(n)} \right| \le  
\nu_1(\lambda,\eta) \max_{0\le \Delta < L} 
\left|\sum_{n=N+1+\Delta}^{N+L} \chi(n)e^{2\pi i
P_1(n-N-1)}\right|,
\end{equation}
where $P_1(x) = f(N+1)+ f'(N+1)x$.
Let $\Delta^*$ be where the maximum is achieved on the r.h.s.\ of 
\eqref{temp 1 1 1}. Let $N^*:=N+\Delta^*$ and
$L^*=L-\Delta^*$. So we have
\begin{equation}\label{corput lemma 1 1 0}
\left|\sum_{n=N+1}^{N+L} \chi(n) e^{2\pi i f(n)} \right| \le  
\nu_1(\lambda,\eta) \left|\sum_{n=N^*+1}^{N^*+L^*} \chi(n)e^{2\pi i
P_1(n-N-1)}\right|.
\end{equation}
We split the range of summation 
$N^*+1\le n\le N^*+L^*$ into arithmetic progressions 
along the residue classes $\ell \pmod{C_1}$. 
For each residue class $0\le \ell < C_1$,
the terms in the progression $n\equiv \ell\pmod {C_1}$
are indexed by the integers $k$ that verify
$N^*+1\le \ell+C_1k \le N^*+L^*$. So we have
$\lceil (N^*+1 -\ell)/C_1\rceil \le k \le \lfloor(N^*+L^*-\ell)/C_1\rfloor$.
Using  the formula $\lceil x+\delta \rceil - \lfloor x\rfloor = 1$,
valid for any $x$ and $\delta \in (0,1)$, 
we deduce that 
$\lceil (N^*+1 -\ell)/C_1\rceil - \lfloor (N^*-\ell)/C_1\rfloor = 1$. 
Therefore, if we define $H_{\ell}:= \lfloor (N^*-\ell)/C_1\rfloor$,
then each $\ell$ determines an integer $\Omega_{\ell} \le \lceil L^*/C_1\rceil$ 
such that (we use the triangle inequality below)
\begin{equation}\label{weyl0 sum 0 0}
\left|\sum_{n=N^*+1}^{N^*+L^*} \chi(n)e^{2\pi i P_1(n-N-1)}\right| \le 
\sum_{\ell =0}^{C_1-1}
\left|\sum_{k=H_{\ell}+1}^{H_{\ell}+\Omega_{\ell}}
\chi(\ell+C_1k)e^{2\pi i P_1(\ell+C_1k-N-1)}\right|.
\end{equation}
From Lemma~\ref{postnikov lemma}, 
and the formula 
$\chi(\ell+ C_1 k) =\chi(\ell)\chi(1+C_1\overline{\ell}k)$, 
valid for $(\ell,q)=1$, 
we deduce that
there are integers $\gamma_1$ and $B_1$ such that
$(\gamma_1,D_1)=1$, $B_1|D_1$, and 
\begin{equation}
\chi(\ell+ C_1 k)=
\chi(\ell)e^{2\pi i B_1\gamma_1 \overline{\ell} k/D_1}, \qquad (\ell,q)=1.
\end{equation}
If $(\ell,q)>1$, then $\chi(\ell+C_1k)=0$. Therefore, 
\begin{equation}\label{weyl0 sum 0}
\left|\sum_{n=N^*+1}^{N^*+L^*} \chi(n)e^{2\pi i P_1(n-N-1)}\right| \le 
\sum_{\substack{\ell =0\\ (\ell,q)=1}}^{C_1-1}
\left|\sum_{k=H_{\ell}+1}^{H_{\ell}+\Omega_{\ell}}
e^{2\pi i (B_1\gamma_1 \overline{\ell} /D_1+ C_1 f'(N+1))k}\right|.
\end{equation}

Let us define 
\begin{equation}
z_f:=\left[\frac{qf'(N+1)}{B_1}\right],\qquad \delta_f:=
\pm \left\|\frac{qf'(N+1)}{B_1}\right\|,
\end{equation}
where $\delta_f$ is positive if 
$z_f$ is obtained by rounding down, and negative if 
$z_f$ is obtained by rounding up. 
In either case, since $D_1C_1=q$ by construction, 
we have $C_1f'(N+1)= (z_f+ \delta_f)B_1/D_1$. Therefore,
\begin{equation}\label{Ul}
\left\| \frac{B_1 \gamma_1\overline{\ell}}{D_1}+C_1 f'(N+1)\right\|
 = \left\| \frac{\gamma_1\overline{\ell} + z_f+\delta_f} {D_1/B_1}\right\|
=:U_{\gamma_1\overline{\ell}+z_f+\delta_f}.
\end{equation}
In view of this, it follows by the Kusmin--Landau Lemma in \cite[Lemma
2]{cheng-graham} that the inner sum in \eqref{weyl0 sum 0} satisfies 
\begin{equation}
\left|\sum_{k=H_{\ell}+1}^{H_{\ell}+\Omega_{\ell}}
e^{2\pi i (B_1\gamma_1 \overline{\ell} /D_1+ C_1 f'(N+1))k}\right|\le 
\min\left(\Omega_{\ell},\frac{1}{\pi} 
U_{\gamma_1\overline{\ell}+z_f+\delta_f}^{-1} +1\right).
\end{equation}
Given this, we divide the sum over $\ell$ in \eqref{weyl0 sum 0} into segments of length
$D_1/B_1$. 
\begin{equation}
\left[\frac{uD_1}{B_1},\frac{(u+1)D_1}{B_1}\right),\quad u\in\mathbb{Z}, 
\quad 0\le u< \frac{B_1 C_1}{D_1}.
\end{equation}
Over each segment, we can 
get an easy handle on $U_{\gamma_1\overline{\ell}+z_f+\delta_f}$. 
Indeed, as $\ell$ runs over the reduced residue classes modulo $q$ 
(hence reduced modulo $D_1/B_1$) in a given segment, 
$\gamma_1 \overline{\ell}+z_f$ runs over a subset of the 
residue classes modulo $D_1/B_1$, hitting each class at most once. 
Therefore, summing over the $B_1C_1/D_1$ segments, and recalling that 
$\Omega_{\ell}\le \lceil L/C_1\rceil$ by construction, 
we obtain  
\begin{equation}\label{corput lemma 1 1 1}
\left|\sum_{n=N^*+1}^{N^*+L^*} \chi(n)e^{2\pi i P_1(n-N-1)}\right|
\le \frac{B_1 C_1}{D_1} \sum_{\ell \pmod*{D_1/B_1}}
\min\left(\lceil L/C_1\rceil,\frac{1}{\pi}U_{\ell+\delta_f}^{-1}+1\right).
\end{equation}
We choose the residue class representatives mod $D_1/B_1$
to be in $[-D_1/2B_1, D_1/2B_1)$
if $\delta_f\ge 0$, 
and in $(-D_1/2B_1, D_1/2B_1]$ if $\delta_f <0$.
In either case, let
$\tilde{\ell}$ denote the representative of $\ell$.
Since $0\le |\delta_f|\le 1/2$, we deduce the formula 
\begin{equation}\label{corput lemma 1 1 2}
U_{\ell+\delta_f} =
\left\{\begin{array}{ll}
\displaystyle
\frac{|\tilde{\ell}|+\sgn(\tilde{\ell})\delta_f}{D_1/B_1} &
\displaystyle
\tilde{\ell} \ne 0,\\ 
&\\
\displaystyle
\frac{|\delta_f|}{D_1/B_1}& 
\displaystyle
\tilde{\ell}=0.
\end{array}\right.
\end{equation}
Now, if $\delta_f\ge 0$, we isolate the terms corresponding to
$\tilde{\ell}=0$ and $\tilde{\ell}=-1$ (if they exist) 
on the r.h.s.\ of \eqref{corput lemma 1 1 1}. 
And if $\delta_f<0$, we isolate the terms for $\tilde{\ell}=0$ and $\tilde{\ell}=1$.
Moreover, we use the lower bound $U_{\pm 1+\delta_f} \ge B_1/2D_1$ to control
the term $\tilde{\ell}=\pm 1$. Then we sum over the remaining $\tilde{\ell}$, 
pairing the terms for 
$\tilde{\ell}$ and $-\tilde{\ell} -1$ if $\delta_f\ge 0$, 
and the terms for $\tilde{\ell}+1$ and $-\tilde{\ell}$
if $\delta_f<0$. In summary, assuming that $D_1/B_1\ge 2$ (so there are at least
two residue class modulo $D_1/B_1$), we obtain
\begin{equation}\label{corput lemma 1 1 3}
\begin{split}
&\sum_{\ell\pmod*{D_1/B_1}}
\min\left(\lceil L/C_1\rceil,\frac{1}{\pi}U_{\ell+\delta_f}^{-1}+1\right)
\le \min\left(\lceil L/C_1\rceil,\frac{1}{\pi}U_{\delta_f}^{-1}+1\right)\\
&+\left(\frac{2D_1}{\pi B_1}+1\right)
+\left(\frac{D_1}{B_1}-2\right)
+\frac{D_1}{\pi B_1}
\sum_{1\le \ell < \frac{D_1}{2B_1}}\left(\frac{1}{\ell+|\delta_f|}
+\frac{1}{\ell+1-|\delta_f|}\right).
\end{split}
\end{equation}
The second sum over $\ell$ on the r.h.s.\ of \eqref{corput lemma 1 1 3} 
is bounded by
\begin{equation}\label{corput lemma 1 1 5}
\sum_{1\le \ell < \frac{D_1}{2B_1}}
\frac{2\ell+1}{\ell^2+\ell+|\delta_f|-\delta_f^2}\le 
\frac{3}{2}+\sum_{2\le \ell < \frac{D_1}{2B_1}} \frac{2}{\ell} \le
\frac{3}{2}+2\log\left(\frac{D_1}{2B_1}\right).
\end{equation}
It is easy to check that the last two estimates still hold if $D_1/B_1=1$.
Hence, substituting \eqref{corput lemma 1 1 5} into \eqref{corput
lemma 1 1 3} 
we obtain, on noting that $\lceil L/C_1\rceil \le L/C_1+1$,
\begin{equation}
\begin{split}
\sum_{\ell\pmod*{D_1/B_1}}
&\min\left(\lceil L/C_1\rceil,\frac{1}{\pi}U_{\ell+\delta_f}^{-1}+1\right)
\le \min\left(\frac{L}{C_1},\frac{1}{\pi} U_{\delta_f}^{-1} \right)\\
&+\frac{2D_1}{\pi B_1}\left(1+\frac{\pi}{2}+\frac{3}{4}\right) 
+\frac{2D_1}{\pi B_1}\log\left(\frac{D_1}{2B_1}\right). 
\end{split}
\end{equation}
We multiply the last estimate by the outside factor $B_1C_1/D_1$ in
\eqref{corput lemma 1 1 1}. This gives
\begin{equation}\label{corput lemma 1 1 6}
\begin{split}
\left|\sum_{n=N^*+1}^{N^*+L^*} \chi(n)e^{2\pi i P_1(n-N-1)}\right|
&\le
\frac{B_1C_1}{D_1}\left(\min\Big(\frac{L}{C_1},\frac{1}{\pi}U_{\delta_f}^{-1}\Big)\right.\\
&+\left.\frac{2C_1}{\pi}\Big(\log
\frac{D_1}{2B_1}+\frac{7}{4}+\frac{\pi}{2}\Big)\right).
\end{split}
\end{equation}
Finally, we use the formula $U_{\delta_f}^{-1} = \|qf'(N+1)/B_1\|^{-1} D_1/B_1$, 
and substitute \eqref{corput lemma 1 1 6} back into \eqref{corput lemma 1 1
0}. After straightforward rearrangements, we obtain the lemma. 
\end{proof}

\begin{lemma}\label{corput lemma 2 2}
Suppose that $f$ is a function satisfying the hypothesis of Lemma~\ref{exp reduce}
for some $\lambda>1$, $\eta \ge 0$, and
with $J=2$. Let $d_m:=(2m,D/B)$.
If $f(x)$ is real for real $x$, then
\begin{equation}\label{corput lemma 2 2 result}
\begin{split}
\left|\sum_{n=N+1}^{N+L} \chi(n) e^{2\pi i f(n)} \right|^2 \le&\,
\frac{4\nu_2(\lambda,\eta)^2\Lambda C L}{\pi}\left(\log
\frac{D}{2B}+\frac{7}{4}+\frac{3\pi}{2\Lambda}\right)\\
&+  \frac{4\nu_2(\lambda,\eta)^2\Lambda
C^2}{\pi}\sum_{m=1}^{\lceil L/C\rceil}\left(1-\frac{m}{\lceil L/C\rceil}\right)
\\
&\times \min\left(\frac{\pi d_m B L}{CD}, 
\left\|\frac{m C^2Df''(N+1)}{Bd_m}\right\|^{-1}\right).
\end{split}
\end{equation}
\end{lemma}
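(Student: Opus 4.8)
The plan is to follow the architecture of Lemma~\ref{corput lemma 1 1}, but with an additional Weyl differencing step: after splitting modulo $C$, the inner arithmetic progressions are now \emph{quadratic} exponential sums rather than geometric ones, so we estimate the split sum by Cauchy--Schwarz over the residue classes together with Weyl differencing of each quadratic piece, which is why the left-hand side appears squared. The differencing linearises the phase and reproduces a sum of exactly the shape handled in Lemma~\ref{corput lemma 1 1}, but now carrying the extra factor $\Lambda=\Lambda(D)$ from counting square roots modulo $D$.

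First I would apply Lemma~\ref{exp reduce} with $J=2$ to replace $f$ by its quadratic Taylor polynomial $P_2(x)=f(N+1)+f'(N+1)x+\tfrac12f''(N+1)x^2$ at the cost of the factor $\nu_2(\lambda,\eta)$, and then square, so that it suffices to bound $\nu_2(\lambda,\eta)^2$ times $\big|\sum_{n=N^*+1}^{N^*+L^*}\chi(n)e^{2\pi i P_2(n-N-1)}\big|^2$ for the maximising shift, where $N^*=N+\Delta^*$ and $L^*=L-\Delta^*\le L$. Next I split the range into the progressions $n=\ell+Ck$ with $0\le\ell<C$, $(\ell,q)=1$ (the terms with $(\ell,q)>1$ vanishing), each consisting of $\Omega_\ell\le\lceil L/C\rceil$ values of $k$ with $\sum_\ell\Omega_\ell=L^*$. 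Writing $\chi(\ell+Ck)=\chi(\ell)\chi(1+C\overline{\ell}k)$ and invoking the quadratic Postnikov expansion $\chi(1+Cx)=e^{4\pi i L_0x/(CD)+2\pi i Lx^2/D}$ of Lemma~\ref{postnikov lemma}, each progression contributes $\chi(\ell)S_\ell$ with $S_\ell=\sum_k e^{2\pi i(\alpha_\ell k+\beta_\ell k^2)}$ and quadratic coefficient $\beta_\ell\equiv\frac{L\overline{\ell}^{\,2}}{D}+\frac{C^2f''(N+1)}{2}\pmod 1$; the linear coefficient $\alpha_\ell$ will be irrelevant. Applying Cauchy--Schwarz over the at most $C$ classes $\ell$ and then Weyl differencing to each $S_\ell$ — after which $\alpha_\ell$ cancels and $|S_\ell|^2\le\Omega_\ell+\sum_{m=1}^{\Omega_\ell-1}\min\big(2(\Omega_\ell-m),\,\|2\beta_\ell m\|^{-1}\big)$ — reduces matters, via $\sum_\ell\Omega_\ell=L^*\le L$ for the diagonal (which already produces the plain $CL$ term), to estimating $\sum_\ell\min\big(2(\lceil L/C\rceil-m),\,\|2\beta_\ell m\|^{-1}\big)$ for each $1\le m<\lceil L/C\rceil$.

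For a fixed $m$ I would write $2\beta_\ell m\equiv\frac{2Lm\overline{\ell}^{\,2}}{D}+mC^2f''(N+1)\pmod 1$. With $B=(L,D)$ and $d_m=(2m,D/B)$, the facts $(L/B,D/B)=1$ and $(\overline{\ell},D)=1$ give $\gcd(2Lm\overline{\ell}^{\,2},D)=Bd_m$, so $\frac{2Lm\overline{\ell}^{\,2}}{D}$ is, in lowest terms, a unit over the modulus $D/(Bd_m)$; hence $\|2\beta_\ell m\|$ is precisely an instance $U_{(\mathrm{unit})\overline{\ell}^{\,2}+z_m+\delta_m}$ of the quantity $U_{\bullet}$ of Lemma~\ref{corput lemma 1 1}, with $z_m+\delta_m=\frac{mC^2Df''(N+1)}{Bd_m}$. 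Partitioning $\ell\in[0,C)$ into the $\approx CBd_m/D$ segments of length $D/(Bd_m)$ (the incomplete final segment treated trivially, and the degenerate case $D/(Bd_m)=1$ being trivial), over each segment $\ell$ runs once through a subset of the residues mod $D/(Bd_m)$, so $\overline{\ell}^{\,2}$ — and therefore $\|2\beta_\ell m\|$ — takes each of its values at most $\Lambda(D/(Bd_m))\le\Lambda(D)=\Lambda$ times (the monotonicity $\Lambda(D/(Bd_m))\le\Lambda(D)$ requires a brief check, with the usual attention to powers of $2$). This bounds $\sum_\ell\min(\cdots)$ by $\tfrac{\Lambda CBd_m}{D}\sum_{r\bmod D/(Bd_m)}\min\big(2\lceil L/C\rceil,\,U_{r+\delta_m}^{-1}+\cdots\big)$, and the inner sum over $r$ is evaluated by the pairing/well-spacing manipulation of Lemma~\ref{corput lemma 1 1}: isolating $r=0$, using $U_{\delta_m}^{-1}=\frac{D}{Bd_m}\big\|\frac{mC^2Df''(N+1)}{Bd_m}\big\|^{-1}$ and restoring the factor $\frac{\Lambda CBd_m}{D}$, produces a term comparable to $\Lambda\min\big(\frac{Cd_mBL}{D},\,C^2\big\|\frac{mC^2Df''(N+1)}{Bd_m}\big\|^{-1}\big)$ (carrying the weight $1-m/\lceil L/C\rceil$), while the remaining $r$ contribute $O\big(\Lambda C(\log\frac{D}{2Bd_m}+\mathrm{const})\big)$, which I bound by using $\log\frac{D}{2Bd_m}\le\log\frac{D}{2B}$.

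Finally I would sum over $1\le m\le\lceil L/C\rceil$, picking up an extra $\lceil L/C\rceil\le L/C+1$ and the Weyl-differencing weights $1-m/\lceil L/C\rceil$ (which appear because the $\pm m$ differenced sums run over $\Omega_\ell-m\le\lceil L/C\rceil-m$ values of $k$), multiply back the outer Cauchy--Schwarz factor $C$ and $\nu_2(\lambda,\eta)^2$, and collect terms: the diagonal together with the contributions of the few residue classes $r$ nearest $0$ — which are counted per class and so do not carry the multiplicity $\Lambda$ — assemble into the $CL$ and $\tfrac{3\pi}{2\Lambda}$ parts of the first line of \eqref{corput lemma 2 2 result}, the logarithmic and $\tfrac74$ parts carry $\Lambda$, and the $r=0$ terms assemble into the second line. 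I expect the main obstacle to be precisely this final accounting: simultaneously tracking the weight $1-m/\lceil L/C\rceil$, securing the multiplicity bound $\Lambda(D/(Bd_m))\le\Lambda$ uniformly in $m$ (the divisibility facts about $B$ and $d_m$ and the behaviour of $\Lambda$ under division), and following the explicit constants through the nested sums so that they collapse into the stated coefficients $\tfrac74$, $\tfrac{3\pi}{2\Lambda}$, and $\tfrac{\pi d_mBL}{CD}$.
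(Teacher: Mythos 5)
Your outline tracks the paper's architecture closely: Lemma~\ref{exp reduce} with $J=2$ to pass to the quadratic Taylor polynomial, splitting into progressions modulo $C$, the quadratic Postnikov expansion from Lemma~\ref{postnikov lemma}, Cauchy--Schwarz over the $\le C$ residue classes, Kusmin--Landau on the differenced sums, the multiplicity count $\Lambda_m=\Lambda(D/(Bd_m))\le\Lambda(D)$ for $\overline{\ell}^{\,2}$ over segments of length $D/(Bd_m)$ (and your gcd computation $\gcd(2Lm\overline{\ell}^{\,2},D)=Bd_m$ is correct), and the isolation/pairing from Lemma~\ref{corput lemma 1 1}. These are all the right moves, and the paper uses them in exactly this order.

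There is, however, one genuine gap, and you have in fact flagged its location without diagnosing it correctly. You propose plain Weyl differencing, $|S_\ell|^2\le\Omega_\ell+2\sum_{m=1}^{\Omega_\ell-1}|S'_m(\ell)|$ with $|S'_m(\ell)|\le\min(\Omega_\ell-m,\frac{1}{\pi}\|\cdot\|^{-1}+1)$, and you claim that the stated weight $1-m/\lceil L/C\rceil$ emerges ``because the $\pm m$ differenced sums run over $\Omega_\ell-m$ values of $k$.'' That is a conflation. In plain Weyl differencing, $\Omega_\ell-m$ is the \emph{trivial arm} of the $\min$; it is not an overall multiplicative factor. The lemma's statement has the factor $1-m/\lceil L/C\rceil$ \emph{outside} the $\min$, so it damps both arms, including the Kusmin--Landau term. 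These are not interchangeable: writing $M=\lceil L/C\rceil$, one has $(1-m/M)\min(M,X)\le\min(M-m,X)$ for all $X>0$, with strict inequality whenever $X<M$. So the bound that plain Weyl differencing produces, $\sum_m\min(M-m,X_m)$, is a \emph{larger} quantity than the one asserted in \eqref{corput lemma 2 2 result}; your route would therefore prove a genuinely weaker inequality, not the stated one. To obtain the overall weight one must use the van der Corput averaged form of Weyl differencing --- the paper cites the precise form of \cite[Lemma 5]{cheng-graham} (bottom of p.~1273), namely $|S_\ell|^2\le(\Delta_\ell+M)\big(\Delta_\ell/M+\frac{2}{M}\sum_{m=1}^M(1-m/M)|S'_m(\ell)|\big)$ with a free averaging parameter $M$, subsequently chosen to be $\lceil L/C\rceil$. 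That averaging is precisely what produces $1-m/\lceil L/C\rceil$ as a standalone weight. Everything else in your ``final accounting'' worry (tracking $\Delta_\ell$, the $\Lambda_m\le\Lambda$ uniformity, and the explicit constants) the paper handles by the bookkeeping identities $\sum_\ell\Delta_\ell=L^*$, $\sum_\ell\Delta_\ell^2\le L^2/C+(\tilde\rho-\tilde\rho^2)C^2$, and $\Delta_\ell\le\lceil L/C\rceil$, so that part of your plan is on track; it is specifically the choice of differencing inequality that needs to change.
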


\begin{proof}
We apply Lemma~\ref{exp reduce} with $J=2$ to the sum.
This yields (similarly to the beginning of the 
proof of Lemma~\ref{corput lemma 1 1}) that
\begin{equation}\label{corput lemma 2 2 0}
\left|\sum_{n=N+1}^{N+L} \chi(n) e^{2\pi i f(n)} \right| \le  
\nu_2(\lambda,\eta) \left|\sum_{n=N^*+1}^{N^*+L^*} \chi(n)e^{2\pi i
P_2(n-N-1)}\right|,
\end{equation}
where $P_2(x) = f(N+1) + f'(N+1)x+ f''(N+1)x^2/2$ 
and $[N^*+1,N^*+L^*]\subset [N+1,N+L]$.
We split the range of summation on the r.h.s.\ 
of \eqref{corput lemma 2 2 0} into arithmetic progressions 
along the residue classes $\ell$ modulo $C$. 
Letting $K_{\ell}:= \lfloor (N^*-\ell)/C\rfloor$
and $\Delta_{\ell} :=  \lfloor(N^*+L^*-\ell)/C\rfloor- K_{\ell}$, we have
\begin{equation}\label{corput lemma 2 2 1}
\sum_{n=N^*+1}^{N^*+L^*} \chi(n)e^{2\pi i P_2(n-N-1)} = 
\sum_{\ell =0}^{C-1}\sum_{k=K_{\ell}+1}^{K_{\ell}+\Delta_{\ell}}
\chi(\ell + C k) e^{2\pi i P_2(\ell + C k-N-1)}.
\end{equation}
We make use of the following properties of $\Delta_\ell$. 
First, by construction, we have 
\begin{equation}\label{corput lemma 2 2 2}
\sum_{\ell=0}^{C-1} \Delta_{\ell}=L^* \le L.
\end{equation}
Second, using the periodicity of $\Delta_{\ell}$ 
 as a function of $\ell\pmod C$,
 and the change of variable $r \equiv N^*-\ell \pmod C$,
 we obtain
\begin{equation}\label{corput lemma 2 2 3}
\sum_{\ell=0}^{C-1} \sqrt{\Delta_{\ell}} =\sum_{r=0}^{C-1} 
\sqrt{\lfloor(L^*+r)/C\rfloor}
\le \sum_{r=0}^{C-1} 
\sqrt{\lfloor(L+r)/C\rfloor}.
\end{equation}
Furthermore, supposing that $L\equiv \ell_0 \pmod C$, where $0\le \ell_0< C$, 
we obtain on considering the summation ranges 
$0\le r\le C-\ell_0-1$ and $C-\ell_0\le r \le C-1$ 
in \eqref{corput lemma 2 2 3} separately that 
\begin{equation}
\sum_{r=0}^{C-1} \sqrt{\lfloor(L+r)/C\rfloor} = (C-\ell_0) \sqrt{(L-\ell_0)/C}
+ \ell_0 \sqrt{(L-\ell_0)/C+1}.
\end{equation}
If we view the r.h.s.\ above as a function of $0\le \ell_0 < C$, say $p(\ell_0)$, 
then its maximum is achieved when $\ell_0=0$. Thus, 
\begin{equation}\label{corput lemma 2 2 4}
\sum_{\ell=0}^{C-1} \sqrt{\Delta_{\ell}} \le p(0)=\sqrt{CL}.
\end{equation}
Also, we have the bound
\begin{equation}\label{corput lemma 2 2 4 0}
\sum_{\ell=0}^{C-1} \Delta_{\ell}^2 \le
\frac{L^2}{C}+(\tilde{\rho}-\tilde{\rho}^2)C,
\qquad \tilde{\rho}:=\ell_0/C.
\end{equation}

We are now ready to return to \eqref{corput lemma 2 2 1}.
Lemma~\ref{postnikov lemma} 
asserts that there is a  polynomial $g_{\ell}(x)$ of degree $2$ in $x$ such that 
\begin{equation}\label{corput lemma 2 2 5}
\chi(\ell+C k) = \chi(\ell)e^{2\pi i g_{\ell}(k)}, \qquad (\ell,q)=1,
\end{equation}
where $g_{\ell}(x) = \alpha_{\ell}x +B \gamma \overline{\ell}^2 x^2/D$,
$(\gamma,q)=1$, and $B|D$.
Therefore, applying the Cauchy--Schwarz inequality to the r.h.s.\ in \eqref{corput lemma 2 2 1},
we obtain
\begin{equation}\label{corput lemma 2 2 6}
\left|\sum_{n=N^*+1}^{N^*+L^*} \chi(n)e^{2\pi i P_2(n-N-1)}\right|^2 \le 
C\sum_{\substack{\ell=0\\ (\ell,q)=1}}^{C-1}\left|\sum_{k=K_{\ell}+1}^{K_{\ell}+\Delta_{\ell}}
 e^{2\pi i Q_{\ell}(k)}\right|^2.
\end{equation}
where $Q_{\ell}(x) := g_{\ell}(x)+P_2(\ell + C x-N-1)$.
We bound the inner sum using the van der Corput--Weyl Lemma
in \cite[Lemma 5]{cheng-graham}.
In fact, we use the more precise form
of the lemma at the bottom of
 page 1273. 
This form implies that if $M$ is a positive integer then 
\begin{equation}\label{corput lemma 2 2 7}
\begin{split}
\Big|\sum_{k=K_{\ell}+1}^{K_{\ell}+\Delta_{\ell}} e^{2\pi i Q_{\ell}(k)} \Big|^2
\le &  (\Delta_{\ell}+M)\Big( 
\frac{\Delta_{\ell}}{M}+\frac{2}{M}\sum_{m=1}^{M}
\left(1-\frac{m}{M}\right)|S_m'(\ell)|\Big),
\end{split}
\end{equation}
where
\begin{equation}\label{corput lemma 2 2 8}
S_m'(\ell):= \sum_{r = K_{\ell}+1}^{K_{\ell}+\Delta_{\ell}-m} e^{2\pi
i (Q_{\ell}(r+m)- Q_{\ell}(r))}.
\end{equation}
Substituting \eqref{corput lemma 2 2 7} into \eqref{corput lemma 2 2 6}, 
and using the properties \eqref{corput lemma 2 2 2}
and \eqref{corput lemma 2 2 4 0}, 
together with the upper 
bound $\Delta_{\ell}\le \lceil L/C\rceil$, we obtain
\begin{equation}\label{corput lemma 2 2 9}
\begin{split}
\Big|\sum_{n=N^*+1}^{N^*+L^*} \chi(n)e^{2\pi i P_2(n-N-1)}\Big|^2 &\le 
CL + \frac{L^2+\tilde{\rho}(1-\tilde{\rho})C^2}{M}\\
&+2C \left(1+\frac{\lceil L/C\rceil}{M}\right)
\sum_{m=1}^{M}\left(1-\frac{m}{M}\right) 
\sum_{\substack{\ell=0\\ (\ell,q)=1}}^{C-1}|S_m'(\ell)|.
\end{split}
\end{equation}
Since
$Q_{\ell}(x)$ is a quadratic polynomial, we have the simpler
expression
\begin{equation}
|S'_m(\ell)| = \left|\sum_{r=K_{\ell}+1}^{K_{\ell}+\Delta_{\ell}-m}
e^{2\pi i (2mB \gamma\overline{\ell}^2/D + mC^2 f''(N+1)) r)}\right|.
\end{equation}
We plan to bound $S'_m(\ell)$ using the Kusmin--Landau Lemma in \cite[Lemma
2]{cheng-graham}. With this in mind, recall the definition  $d_m=(2m,D/B)$. 
Let
\begin{equation}
m'=:\frac{2m}{d_m},
\qquad P_m:=\frac{D}{Bd_m}.
\end{equation}
Let\footnote{If each prime factor of $q$ occurs with multiplicity $>1$, then 
$C^2D= q$ if $q$ is odd, and $C^2D= 2q$ if $q$ is even. So the expressions that
follow can be simplified in this case.}
\begin{equation}
\begin{split}
&w_m:= [P_m m C^2f''(N+1)] = \left[\frac{m' C^2D f''(N+1)}{2B}\right],\\
&\epsilon_m:=\|P_mmC^2f''(N+1)\| =\pm \left\|\frac{m' C^2D f''(N+1)}{2B}\right\|.
\end{split}
\end{equation}
Here, $\epsilon_m$ is positive if            
 $w_m$ is obtained by rounding down, and negative if
 $w_m$ is obtained by rounding up. Hence,
\begin{equation}
 \left\| \frac{2 m B \gamma \overline{\ell}^2}{D} + m C^2 f''(N+1)\right\|
 =  \left\| \frac{m'\gamma \overline{\ell}^2+ w_m
 +\epsilon_m}{P_m}\right\|.
\end{equation}
Letting $U_{z,m} := \| z/P_m\|$, 
the Kusmin--Landau Lemma furnishes the estimate
\begin{equation}\label{corput lemma 2 2 11}
|S'_m(\ell)| \le 
\min\left(\Delta_{\ell}-m,\frac{1}{\pi}
U_{m'\gamma\overline{\ell}^2+w_m+\epsilon_m,m}^{-1}+1\right).
\end{equation}
Therefore, using the inequality $\Delta_{\ell}\le \lceil L/C\rceil$
yields
\begin{equation}
\mathcal{S}_m=
\sum_{\substack{\ell=0\\ (\ell,q)=1}}^{C-1}|S_m'(\ell)|
\le \sum_{\substack{\ell=0\\ (\ell,q)=1}}^{C-1}
\min\left(\lceil L/C\rceil-m,\frac{1}{\pi}
U_{m'\gamma\overline{\ell}^2+w_m+\epsilon_m,m}^{-1}+1\right).
\end{equation}
To get an explicit expression for $U_{z,m}$, 
we consider subsums of $\mathcal{S}_m$ 
over the segments 
\begin{equation}
\left[uP_m,(u+1)P_m\right),\quad u\in\mathbb{Z}, \quad
0\le u< C/P_m.
\end{equation}
To this end, let
\begin{equation}
\Lambda_m:=\#\{0\le x<P_m\,:\, x^2\equiv 1\pmod{P_m}\}.
\end{equation}
As $\ell$ runs over the reduced residue classes in each segment,  
then since $(m'\gamma,P_m)=1$ and $\overline{\ell}$ is squared,
if $m'\gamma\overline{\ell}^2+w_m$ hits
a residue class modulo $P_m$, it does so $\Lambda_m$ times.
Let $\mathcal{R}_m$ denote the classes that are hit.
We have that the cardinality of $\mathcal{R}_m$ is 
$\le P_m/\Lambda_m$. 
If $\epsilon_m\ge 0$
we take  $\mathcal{R}_m\subset [-P_m/2, P_m/2)$ 
while if $\epsilon_m<0$ we take 
$\mathcal{R}_m\subset (-P_m/2, P_m/2]$.  
Furthermore, given $m'\gamma \bar{\ell}^2+w_m$, 
let $\tilde{\ell}\in \mathcal{R}_m$ denote the class representative 
that it hits.
Then, summing over the $C/P_m$ segments, we obtain
\begin{equation}
\mathcal{S}_m \le \frac{C\Lambda_m}{P_m}
\sum_{\tilde{\ell}\in\mathcal{R}_m}\min\left(\lceil L/C\rceil-m,\frac{1}{\pi}
U^{-1}_{\tilde{\ell}+\epsilon_m,m}+1\right),
\end{equation}
and we have the formula
\begin{equation}
U_{\tilde{\ell}+\epsilon_m,m} = \left\{\begin{array}{ll}
\displaystyle
\frac{|\tilde{\ell}|+\sgn(\tilde{\ell})\epsilon_m}{P_m}&
\displaystyle \tilde{\ell}\ne 0,\\
\,&\\
\displaystyle
\frac{|\epsilon_m|}{P_m}& \tilde{\ell} = 0.
\end{array}\right.
\end{equation}
At worst, the classes that are hit concentrate in
$[-P_m/2\Lambda_m,P_m/2\Lambda_m]$. If $\epsilon_m\ge 0$, 
we isolate the terms corresponding to $\tilde{\ell}=0$ and $\tilde{\ell}=-1$ (if they exist),
and pair the remaining terms for $\tilde{\ell}$ and $-\tilde{\ell}-1$. 
While if $\epsilon_m<0$, we isolate the terms for $\tilde{\ell} =0$ and
$\tilde{\ell} = 1$,
and pair the remaining terms for $\tilde{\ell}+1$ and $-\tilde{\ell}$. 
Since $0\le |\epsilon_m|\le 1/2$ and $P_m/\Lambda_m\ge 1$, 
this gives 
\begin{equation}
\begin{split}
\mathcal{S}_m &\le  \frac{C\Lambda_m}{P_m}\min\left(\lceil L/C\rceil-m,\frac{P_m}{\pi
|\epsilon_m|}+1\right)+  \frac{C\Lambda_m}{P_m}\left(\frac{2P_m}{\pi}+1\right)\\
&+  \frac{C\Lambda_m}{P_m}\left(\frac{P_m}{\Lambda_m}-1\right) 
+\frac{C \Lambda_m}{\pi}
\sum_{1\le \ell< \frac{P_m}{2\Lambda_m}} 
\left(\frac{1}{\ell+|\epsilon_m|}+\frac{1}{\ell+1-|\epsilon_m|}\right).
\end{split}
\end{equation}
Furthermore, 
\begin{equation}
\sum_{1\le \ell< \frac{P_m}{2\Lambda_m}} 
\left(\frac{1}{\ell+|\epsilon_m|}+\frac{1}{\ell+1-|\epsilon_m|}\right)
\le \sum_{1\le \ell< \frac{P_m}{2\Lambda_m}}\frac{2\ell+1}{\ell^2+\ell}
\le \frac{3}{2}+ 2\log \frac{P_m}{2\Lambda_m}.
\end{equation}
Hence, using to the trivial inequalities $\lceil L/C\rceil < L/C+1$ 
and $1\le \Lambda_m\le P_m$, together with the observation $P_m|D$ so that 
$\Lambda_m=\Lambda(P_m) \le \Lambda(D)= \Lambda$, we obtain
\begin{equation}
\mathcal{S}_m \le 
\frac{C\Lambda_m}{P_m}\min\left(
L/C-m,\frac{P_m}{\pi |\epsilon_m|}\right)
+\frac{2C\Lambda}{\pi}+2C
+\frac{C\Lambda}{\pi}\left(\frac{3}{2}+ 2\log \frac{D}{2B}\right).
\end{equation}
Now, we have $\sum_{1\le m\le M}(1-m/M) = (M-1)/2$.
So summing over $m$ we obtain
\begin{equation}\label{corput lemma 2 2 12}
\begin{split}
\sum_{m=1}^{M}\left(1-\frac{m}{M}\right) \mathcal{S}_m 
&\le C\sum_{m=1}^{M}\left(1-\frac{m}{M}\right)\frac{\Lambda_m}{P_m}
\min\left( L/C-m,\frac{P_m}{\pi
|\epsilon_m|}\right)\\
&+C(M-1)+
\frac{C\Lambda (M-1)}{\pi}\left(\frac{7}{4}+\log \frac{D}{2B}\right).
\end{split}
\end{equation}
In \eqref{corput lemma 2 2 12}, 
we choose $M=\lceil L/C\rceil$, so that $M=L/C+1-\tilde{\rho}$. 
Then we substitute the
resulting expression into \eqref{corput lemma 2 2 9}, which gives
\begin{equation}\label{corput lemma 2 2 14}
\begin{split}
\Big|\sum_{n=N^*+1}^{N^*+L^*} \chi(n)e^{2\pi i P_2(n-N-1)}\Big|^2 &\le 
CL+\frac{L^2+\tilde{\rho}(1-\tilde{\rho})C^2}{L/C+1-\tilde{\rho}}\\
&+4C^2\sum_{m=1}^{\lceil L/C\rceil} \left(1-\frac{m}{\lceil L/C\rceil}\right)\frac{\Lambda_m}{P_m}
\min\left(L/C-m,\frac{P_m}{\pi
|\epsilon_m|}\right)\\
&+4CL
+\frac{4\Lambda C L}{\pi}\left(\frac{7}{4}+\log \frac{D}{2 B}\right).
\end{split}
\end{equation}
At this point, we may assume that $L \ge C$, otherwise the lemma 
is trivial due to the first term in \eqref{corput lemma 2 2 result}.
Given this assumption, it is easy to check that the second term in
\eqref{corput lemma 2 2 14}, viewed as a function of $\tilde{\rho}$, has no critical
points in the interval $[0,1)$, and so it is monotonic over that interval. 
Comparing the values at $\tilde{\rho}=0$ and $\tilde{\rho}=1$, we deduce that the maximum
is at $\tilde{\rho}=1$. 
Using this in \eqref{corput lemma 2 2 14}, and 
substituting the result into \eqref{corput lemma 2 2 0} (after squaring both
sides there) yields the lemma.
\end{proof}

\section{Proof of Theorem~\ref{main theorem}}

If $\chi=\chi_0$ is the principal character, then
\begin{equation}
L(s,\chi_0) = \zeta(s)\prod_{p|q} (1-p^{-s}).
\end{equation}
Bounding the product above trivially, we obtain
\begin{equation}
\begin{split}
|L(1/2+it,\chi_0)|&\le 
|\zeta(1/2+it)|
\prod_{p|q}\left(1+1/\sqrt{p}\right) \le |\zeta(1/2+it)| \tau(q).
\end{split}
\end{equation}
(Note that this is a large overestimate, but it is still fine since the
difficult part of the proof is $\chi\ne \chi_0$.)
Combining this with the bound 
for the Riemann zeta function in \cite{hiary-corput}, we arrive at
\begin{equation}\label{zeta bound}
|L(1/2+it,\chi_0)| \le 0.63 \tau(q) \mathfrak{q}^{1/6}\log\mathfrak{q},\qquad
(|t| \ge 3).
\end{equation}
So the theorem follows in this case.
Henceforth, we assume 
that $\chi$ is nonprincipal, and so $q>2$.

Let $\rho =1.3$, which is 
a parameter that will control the size of the
segments in our dyadic subdivision. 
The starting point of the dyadic subdivision is 
\begin{equation}
v_0 =\left\lceil \frac{C |t|^{1/3}}{(\rho-1)^2}\right\rceil.
\end{equation}
We assume that $|t|\ge t_0\ge \rho^3/(\rho-1)^3$ 
where $t_0:=200$.
Since $q>2$ by assumption, then $\mathfrak{q}\ge \mathfrak{q}_0:= 3t_0$. 

From the Dirichlet series definition of $L(s,\chi)$, we have
\begin{equation}\label{trivial dirichlet bound}
|L(1/2+it,\chi)| \le \left|\sum_{n=1}^{\infty} \frac{\chi(n)}{n^{1/2+it}}\right|.
\end{equation}
We divide the summation range on the r.h.s.\ of \eqref{trivial dirichlet bound}
into an initial sum followed by dyadic segments 
$[\rho^{\ell} v_0,\rho^{\ell+1}v_0)$. 
This gives 
\begin{equation}
|L(1/2+it,\chi)| \le \left|\sum_{n=1}^{v_0-1} \frac{\chi(n)}{n^{1/2+it}}\right|
+\sum_{\ell=0}^{\infty} \left|\sum_{\rho^{\ell} v_0\le n<\rho^{\ell+1}v_0}
\frac{\chi(n)}{n^{1/2+it}}\right|.
\end{equation}
The $\ell$-th dyadic segment is subdivided into blocks of length $L_{\ell}$
where
\begin{equation}
L_{\ell}=\left\{\begin{array}{ll}
\left\lceil \frac{(\rho-1)\rho^{\ell}v_0}{|t|^{1/3}}
\right\rceil, & 0\le \ell< \ell_0:=
\frac{\log(C D|t|^{2/3}/v_0)}{\log\rho}\\
&\\
\left\lceil \frac{(\rho-1)\rho^{\ell} v_0}{|t|^{1/2}}\right\rceil, & 
\ell_0 \le \ell < 
\ell_1:=\frac{\log(q|t|/5 v_0)}{\log \rho},\\
&\\
\left\lceil \frac{(\rho-1)\rho^{\ell} v_0}{|t|}\right\rceil, & 
\ell_1 \le \ell,
\end{array}\right.
\end{equation}
plus a (possibly empty) boundary block. (Note that our assumption $|t|\ge t_0$ and
 the fact $CD\le q$ imply that $\ell_0<\ell_1$.) So there are 
$R_{\ell} = \lceil (\rho-1)\rho^{\ell}v_0/L_{\ell}\rceil$
blocks in the $\ell$-th segment. 
The $r$-th block
in the $\ell$-th segment 
begins at
\begin{equation}
N_{r,\ell}+1= \lceil \rho^{\ell} v_0\rceil +rL_{\ell}, \qquad 
(0\le r< R_{\ell}).
\end{equation}
We first bound the initial sum, then we bound 
the sum over each range of $\ell$ separately.

\subsection{Initial sum}
The initial sum is bounded trivially using the triangle inequality,
the fact $|\chi(n)/n^{1/2+it}|\le 1/\sqrt{n}$, and on comparing with the integral
$\int_0^{v_0-1} \frac{1}{\sqrt{x}}dx$. Recalling that $C/q^{1/3}=\cbf(q)$, this
gives
\begin{equation}\label{main theorem region 0}
 \left|\sum_{n=1}^{v_0-1} \frac{\chi(n)}{n^{1/2+it}}\right| 
 \le 2\sqrt{v_0-1} \le \mathfrak{v}_0 \sqrt{\cbf(q)}\mathfrak{q}^{1/6},
\qquad
\mathfrak{v}_0:= \frac{2}{\rho-1}.
\end{equation}

\subsection{Sum over $0\le \ell < \ell_0$}
Using the Cauchy--Schwarz inequality we obtain
\begin{equation}\label{main theorem 0}
\left|\sum_{0\le \ell<\ell_0} \sum_{\rho^{\ell} v_0\le n<\rho^{\ell+1}v_0}
\frac{\chi(n)}{n^{1/2+it}}\right|^2\le
(\ell_0+1)\sum_{0\le \ell<\ell_0} 
\left|\sum_{\rho^{\ell} v_0\le n<\rho^{\ell+1}v_0}
\frac{\chi(n)}{n^{1/2+it}}\right|^2
\end{equation}
We partition
the $\ell$-th dyadic segment in \eqref{main theorem 0}
into blocks of length $L_{\ell}$.
Then we apply partial summation to each segment 
to remove the weighting factor $1/\sqrt{n}$.
Finally, we apply the Cauchy--Schwarz inequality to the sum of the blocks. 
This yields
\begin{equation}\label{main theorem 1}
\begin{split}
&\left|\sum_{\rho^{\ell} v_0\le n < \rho^{{\ell}+1}v_0} 
\frac{\chi(n)}{n^{1/2+it}}\right|^2 \le 
\frac{R_{\ell}}{\rho^{\ell}v_0} 
\sum_{r=0}^{R_{\ell}-1} \max_{0\le \Delta \le L_{\ell}}\left| 
\sum_{n=N_{r,\ell}+1}^{N_{r,\ell}+ \Delta} 
\frac{\chi(n)}{n^{it}}\right|^2.
\end{split}
\end{equation}
We estimate the inner sum in \eqref{main theorem 1} 
via Lemma~\ref{corput lemma 2 2}.
To this end, let 
\begin{equation}
\lambda = \frac{1}{\sqrt{\rho-1}},\qquad 
f(x)=-\frac{t}{2\pi}\log x,
\end{equation}
and $1\le L=\Delta\le L_{\ell}$. (Note that $\lambda>1$, as required by the
lemma.)
We have
\begin{equation}\label{analyticity condition}
\frac{\lambda (L_{\ell}-1)}{N_{r,\ell}+1} < \frac{1}{\lambda |t|^{1/3}}< 1.
\end{equation}
So $f(N_{r,\ell}+1+z)$ is analytic on a disk of radius $|z|\le
\lambda(L_{\ell}-1)$. 
Moreover, as a consequence of \eqref{analyticity
condition},
\begin{equation}
\frac{|f^{(j)}(N_{r,\ell}+1)|}{j!}\lambda^j (L-1)^j = 
\frac{|t|\lambda^j(L-1)^j}{2\pi j (N_{r,\ell}+1)^j}
\le \frac{1}{2\pi j \lambda^j},\qquad (j\ge 3).
\end{equation}
In particular, the required bound on $|f^{(j)}(N_{r,\ell}+1)|/j!$ 
in Lemma~\ref{corput lemma 2 2} holds with $\eta = 1/6\pi$.
Therefore, letting 
$\nu_2=\nu_2(1/\sqrt{\rho-1},1/6\pi)$ and 
\begin{equation}
y_{r,m,\ell} := \frac{mC^2D f''(N_{\ell,r}+1)}{Bd_m}
= \frac{m}{d_m} \frac{C^2 D t}{2\pi B}\frac{1}{(N_{r,\ell}+1)^2},
\end{equation}
Lemma~\ref{corput lemma 2 2} gives 
that the r.h.s.\ in \eqref{main theorem 0} is bounded by 
\begin{equation}\label{main theorem 2}
\frac{4\nu_2^2\Lambda}{\pi} (\ell_0+1)\sum_{0\le \ell <\ell_0} \left(*_{\ell}+**_{\ell}\right),
\end{equation}
where
\begin{equation}
\begin{split}
&*_{\ell} :=\frac{CL_{\ell}R_{\ell}^2}{\rho^{\ell}v_0}
\left(\log\frac{D}{2B}+\frac{7}{4}+\frac{3\pi}{2\Lambda}\right)\\
&\textrm{$**_{\ell}$}:=\frac{C^2R_{\ell}}{\rho^{\ell} v_0}
\sum_{m=1}^{\lceil L_{\ell}/C\rceil}W(m)
\sum_{r=0}^{R_{\ell}-1} \min\left(\frac{\pi d_m B L_{\ell}}{CD},
\frac{1}{\|y_{r,m,\ell}\|}\right),
\end{split}
\end{equation}
where for brevity we write 
\begin{equation}
W(m)=1-\frac{m}{\lceil L_{\ell}/C\rceil}.
\end{equation}
We consider the term $*_{\ell}$ first since it is easier to handle. 
Since $(\rho-1)^2v_0/|t|^{1/3} \ge  C$, we obtain
\begin{equation}\label{Lell bound0}
\frac{(\rho-1)\rho^{\ell}v_0}{|t|^{1/3}}
\le L_{\ell}\le
\frac{(\rho-1)\rho^{\ell+1}v_0}{|t|^{1/3}}.
\end{equation}
And the upper bound in \eqref{Lell bound0} gives
$(\rho-1)^2 v_0/L_{\ell} \ge 1$. Hence,
\begin{equation}\label{Rell bound}
\frac{(\rho-1)\rho^{\ell}v_0}{L_{\ell}} \le R_{\ell} \le
\frac{(\rho-1)\rho^{\ell+1}v_0}{L_{\ell}}.
\end{equation}
As can be seen from \eqref{Rell bound} and the definition of $L_{\ell}$, we have 
\begin{equation}\label{Rell bound 2}
R_{\ell} \le \rho |t|^{1/3}, \qquad (0\le \ell < \ell_0). 
\end{equation}
Using this bound, the bound \eqref{Rell bound}, 
and the inequality $\Lambda \ge 2$ (valid since $q>2$ by assumption),
we arrive at
\begin{equation}\label{main theorem 4}
\sum_{0\le \ell < \ell_0}*_{\ell} \le 
(\ell_0+1) \rho^2(\rho-1) C |t|^{1/3}
\left(\log \frac{D}{2}+\frac{7}{4}+\frac{3\pi}{4}\right).
\end{equation}
Furthermore, by our choice of $v_0$, we have
\begin{equation}\label{ell0 bound}
\ell_0+1 \le 
\frac{\log\left(\rho(\rho-1)^2 D|t|^{1/3}\right)}{\log\rho}.
\end{equation}
Therefore, using
the inequality $\log\frac{D|t|^{1/3}}{2} \le
\log\mathfrak{q}^{1/3}$, 
the formula $C/q^{1/3} = \cbf(q)$,
and incorporating the additional factor $\ell_0+1$ from 
\eqref{main theorem 2} into our estimate, gives
\begin{equation}\label{star eq 1}
(\ell_0+1) \sum_{0\le \ell<\ell_0} *_{\ell}\le \mathfrak{v}_1 \cbf(q) \mathfrak{q}^{1/3}
Z_1(\log \mathfrak{q}),\qquad
\mathfrak{v}_1:=\frac{\rho^2(\rho-1)}{27\log^2 \rho},
\end{equation}
where
\begin{equation}
Z_1(X) := \left(X-\log t_0 +21/4+ 9\pi/4\right)
\left(X+3\log(2\rho(\rho-1)^2)\right)^2.
\end{equation}

The term $**_{\ell}$ in \eqref{main theorem 2} is more 
complicated to handle. 
First, we apply Lemma~\ref{well spacing lemma}
to estimate the sum over $r$ there.
To this end, note that
\begin{equation}
(N_{r+1,\ell}+1)^2-(N_{r,\ell}+1)^2\ge 2\lceil \rho^{\ell} v_0\rceil
L_{\ell}, \qquad  (0\le r < R_{\ell}-1).
\end{equation}
Moreover, by construction,
$N_{R_{\ell}-1,\ell}+1\le \lfloor \rho^{\ell+1} v_0\rfloor$ and 
$N_{0,\ell} +1\ge \lceil \rho^{\ell}v_0\rceil$.
Hence,
\begin{equation}\label{well spacing parameters}
\begin{split}
&|y_{r+1,m,\ell}-y_{r,m,\ell}| 
\ge\frac{m}{d_m} \frac{C^2D|t|}{2\pi B}
\frac{2\lceil \rho^{\ell} v_0\rceil L_{\ell}}{\lfloor
\rho^{\ell+1}v_0\rfloor^4},\qquad (0\le r<R_{\ell}-1),\\
&|y_{R_{\ell}-1,m,\ell}-y_{0,m,\ell}|  
\le \frac{m}{d_m} \frac{C^2D|t|}{2\pi B}
\frac{\rho^2-1}{\lfloor \rho^{\ell+1}v_0\rfloor^2}.
\end{split}
\end{equation}

We apply Lemma~\ref{well spacing lemma} to the sequence $\{y_{r,m,\ell}\}_r$ with
$y=y_{R_{\ell}-1,m,\ell}$, $x=y_{0,m,\ell}$, 
$P=\pi d_m B L_{\ell}/CD$, 
 and (since $y_{r,m,\ell}$ is monotonic in $r$) 
with $\delta$ equal to 
the lower bound for $|y_{r+1,m,\ell}-y_{r,m,\ell}|$
in \eqref{well spacing parameters}.
Using these parameter choices, 
Lemma~\ref{well spacing lemma} gives  
\begin{equation}
\sum_{r=0}^{R_{\ell}-1} \min\left(\frac{\pi d_m B L_{\ell}}{CD},
\frac{1}{\|y_{r,m,\ell}\|}\right)
\le 2(y-x+1)(2P+\delta^{-1}\log (e\max\{P,2\}/2)). 
\end{equation}
Multiplying out the brackets,
we obtain four terms: $2(y-x)\delta^{-1} \log (e\max\{P,2\}/2))$,
$2\delta^{-1}\log (e\max\{P,2\}/2))$, $4(y-x)P$, and $4P$. 
We estimate the sum of each term over $m$ with the aid of
the following
inequalities, which are either straightforward to prove (left two inequalities) or are a consequence of
Lemma~\ref{gcd sum lemma}.
\begin{equation}\label{m bounds}
\begin{split}
&\sum_{m=1}^{\lceil L_{\ell}/C\rceil}W(m) \le \frac{L_{\ell}}{2C},
\qquad
\sum_{m=1}^{\lceil L_{\ell}/C\rceil} W(m)\frac{d_m}{m}
\le  \tau\left(D/B\right)\log \lceil L_{\ell}/C\rceil,\\
&\sum_{m=1}^{\lceil L_{\ell}/C\rceil}
W(m) m \le  \frac{L_{\ell}^2}{2C^2},
\qquad
\sum_{m=1}^{\lceil L_{\ell}/C\rceil}
W(m)d_m
\le \tau\left(D/B\right) \lceil L_{\ell}/C\rceil.
\end{split}
\end{equation}
Combining 
\eqref{Lell bound0}, \eqref{Rell bound 2}, 
\eqref{well spacing parameters}, 
and  \eqref{m bounds}, 
together with the bound (here we use $L_{\ell}\ge C$)
\begin{equation}
\log\left(e\max\{P,2\}/2\right) \le 
\log\left(\frac{e\pi L_{\ell}}{2C}\right),
\end{equation}
we routinely deduce the estimates
\begin{equation}\label{main theorem 5}
\begin{split}
&\frac{C^2R_{\ell}}{\rho^{\ell}v_0}
\sum_{m=1}^{\lceil L_{\ell}/C\rceil}
W(m) \frac{(\rho^2-1)\lfloor \rho^{\ell+1}v_0 \rfloor^2}{ \lceil
\rho^{\ell}v_0\rceil L_{\ell}}
\log\left(\frac{e\pi L_{\ell}}{2C}\right)
\le \mathcal{B}_1(\ell)\\
&\frac{C^2R_{\ell}}{\rho^{\ell}v_0}
\sum_{m=1}^{\lceil L_{\ell}/C\rceil}
W(m) \frac{d_m}{m}
\frac{2\pi B}{C^2D|t|}
\frac{\lfloor \rho^{\ell+1}v_0\rfloor^4}
{\lceil \rho^{\ell} v_0\rceil L_{\ell}}
\log\left(\frac{e\pi L_{\ell}}{2C}\right)
\le \mathcal{B}_2(\ell)\\ 
& \frac{C^2R_{\ell}}{ \rho^{\ell}v_0}
\sum_{m=1}^{\lceil L_{\ell}/C\rceil}
W(m)\frac{4\pi d_m B L_{\ell}}{CD}
\frac{m}{d_m} \frac{C^2D|t|}{2\pi B}
\frac{\rho^2-1}{\lfloor \rho^{\ell+1}v_0\rfloor^2}
\le \mathcal{B}_3(\ell)\\
&\frac{C^2R_{\ell}}{\rho^{\ell}v_0}
\sum_{m=1}^{\lceil L_{\ell}/C\rceil}
W(m) d_m \frac{4\pi B L_{\ell}}{CD}
\le \mathcal{B}_4(\ell),
\end{split}
\end{equation}
where
\begin{equation}
\begin{split}
&\mathcal{B}_1(\ell):=\frac{\rho^3(\rho^2-1)}{2}C|t|^{1/3} \log\left(
\frac{e\pi L_{\ell}}{2C}\right),\\ 
&\mathcal{B}_2(\ell):=
\frac{2\pi\rho^5}{(\rho-1)}\frac{\rho^{\ell}v_0}{D|t|^{1/3}} 
B\tau(D/B)\log\left(\frac{eL_{\ell}}{C}\right)
\log\left(\frac{e\pi L_{\ell}}{2C}\right),\\ 
&\mathcal{B}_3(\ell):=\rho^3(\rho-1)^3(\rho^2-1) C|t|^{1/3}, \\
&\mathcal{B}_4(\ell):=4\pi \rho^2(\rho-1)^2\frac{\rho^{\ell}v_0}{D|t|^{1/3}}
B\tau(D/B).
\end{split}
\end{equation}
Incorporating the additional factor $\ell_0+1$ from
\eqref{main theorem 0} into our estimate, we therefore conclude that
\begin{equation}\label{Bcal bound}
(\ell_0+1)\sum_{0\le \ell<\ell_0} **_{\ell} \le \sum_{j=1}^4(\ell_0+1)\sum_{0\le \ell<\ell_0} \mathcal{B}_j(\ell).
\end{equation}
We estimate the sum over $\ell$ in \eqref{Bcal bound} as a geometric progression.
To this end, we use the bound on $\ell_0$ in \eqref{ell0 bound}, 
the bound
\begin{equation}\label{Lell bound}
L_{\ell} \le \rho(\rho-1)CD|t|^{1/3},\qquad (0\le \ell < \ell_0).
\end{equation}
which follows directly from the definitions of $L_{\ell}$ and $\ell_0$, 
and consequently the bound
\begin{equation}
\log\left(\frac{e\pi L_{\ell}}{2C}\right) 
\le \log\left(\frac{e\pi \rho(\rho-1) D|t|^{1/3}}{2}\right),
\qquad (0\le \ell<\ell_0).
\end{equation}
After some rearrangements, this yields
\begin{equation}\label{main theorem 3}
\begin{split}
&(\ell_0+1)\sum_{0\le \ell < \ell_0} B_1(\ell) \le 
\mathfrak{v}_2 \cbf(q) \mathfrak{q}^{1/3}Z_2(\log \mathfrak{q}),\qquad
\mathfrak{v}_2:= \frac{\rho^3(\rho^2-1)}{54\log^2 \rho},\\
&(\ell_0+1)\sum_{0\le \ell < \ell_0} B_2(\ell)  \le 
\mathfrak{v}_3 \cbf(q) B\tau(D/B) \mathfrak{q}^{1/3}Z_3(\log\mathfrak{q}),\qquad
\mathfrak{v}_3:= \frac{2\pi \rho^6}{27(\rho-1)^2\log \rho},\\
& (\ell_0+1)\sum_{0\le \ell < \ell_0} B_3(\ell)  \le 
\mathfrak{v}_4 \cbf(q) \mathfrak{q}^{1/3}Z_4(\log\mathfrak{q}),\qquad
\mathfrak{v}_4:= \frac{\rho^3(\rho-1)^3(\rho^2-1)}{9\log^2
\rho},\\
& (\ell_0+1)\sum_{0\le \ell < \ell_0} B_4(\ell)  \le
\mathfrak{v}_5\cbf(q) B\tau(D/B) \mathfrak{q}^{1/3}Z_5(\log\mathfrak{q}),\qquad
\mathfrak{v}_5:= \frac{4\pi
\rho^3(\rho-1)}{3\log\rho}.
\end{split}
\end{equation}
where
\begin{equation}
\begin{split}
&Z_2(X):= \left(X+ 3\log(e\pi \rho(\rho-1)\right)\left(X+3\log(2\rho(\rho-1)^2)\right)^2   \\
&Z_3(X):= \left(X+ 3\log(2e\rho(\rho-1)\right)   
\left(X+ 3\log(e\pi \rho(\rho-1)\right)\times \\
&\qquad\qquad \left(X+3\log(2\rho(\rho-1)^2)\right)\\
&Z_4(X):= \left(X+3\log(2\rho(\rho-1)^2)\right)^2   \\
&Z_5(X):= X+3\log(2\rho(\rho-1)^2)   \\
\end{split}
\end{equation}
Therefore,
\begin{equation}\label{star eq 2}
\begin{split}
(\ell_0+1)\sum_{0\le \ell <\ell_0} **_{\ell} &\le
\mathfrak{v}_2\cbf(q)\mathfrak{q}^{1/3}Z_2(\log \mathfrak{q})\\
&+\mathfrak{v}_3B\tau(D/B)\cbf(q)\mathfrak{q}^{1/3}Z_3(\log
\mathfrak{q})\\
&+\mathfrak{v}_4\cbf(q)\mathfrak{q}^{1/3}Z_4(\log \mathfrak{q})\\
&+\mathfrak{v}_5B\tau(D/B)\cbf(q)\mathfrak{q}^{1/3}Z_5(\log \mathfrak{q}).
\end{split}
\end{equation}

We combine \eqref{star eq 2} and \eqref{star eq 1}, 
and use the inequality
$\sqrt{x+y}\le \sqrt{x}+\sqrt{y}$ with $x,y\ge 0$. This gives
\begin{equation}
\sqrt{(\ell_0+1)\sum_{0\le \ell < \ell_0} (*_{\ell}+**_{\ell})} 
\le \sqrt{\cbf(q)} 
\left(\sqrt{Z_6(\log
\mathfrak{q})}+\sqrt{B\tau(D/B)Z_7(\log\mathfrak{q})}\right)
\mathfrak{q}^{1/6}
\end{equation}
where
\begin{equation}
\begin{split}
&Z_6(X):= \mathfrak{v}_1Z_1(X)+\mathfrak{v}_2Z_2(X)+\mathfrak{v}_4Z_4(X),\\
&Z_7(X):= \mathfrak{v}_3Z_3(X)+\mathfrak{v}_5Z_5(X).
\end{split}
\end{equation}
Finally, we substitute this back into \eqref{main theorem 0} which yields 
\begin{equation}\label{main theorem region 1}
\begin{split}
\left|\sum_{0\le \ell < \ell_0} 
 \sum_{\rho^{\ell}v_0 \le n < \rho^{\ell+1} v_0}
 \frac{\chi(n)}{n^{1/2+it}}\right|
&\le \frac{2\nu_2}{\sqrt{\pi}} \sqrt{\Lambda \cbf(q) Z_6(\log
\mathfrak{q})} \mathfrak{q}^{1/6}\\
&+ \frac{2\nu_2}{\sqrt{\pi}} \sqrt{\Lambda \cbf(q)
 B\tau(D/B)Z_7(\log\mathfrak{q})}\mathfrak{q}^{1/6}.
\end{split}
\end{equation}

\subsection{Sum over $\ell_0 \le \ell < \ell_1$} 
Applying the triangle inequality 
and partial summation gives
\begin{equation}\label{main theorem 6}
\sum_{\ell_0\le \ell<\ell_1}
\left|\sum_{\rho^{\ell} v_0\le n < \rho^{{\ell}+1}v_0} 
\frac{\chi(n)}{n^{1/2+it}}\right| \le 
\sum_{\ell_0\le \ell<\ell_1}
\frac{1}{(\rho^{\ell}v_0)^{1/2}}\sum_{r=0}^{R_{\ell}-1} \max_{0\le \Delta \le L_{\ell}}\left| 
\sum_{n=N_{r,\ell}+1}^{N_{r,\ell}+ \Delta} 
\frac{\chi(n)}{n^{it}}\right|.
\end{equation}
We bound the inner sum in \eqref{main theorem 6} via
Lemma~\ref{corput lemma 1 1}.
Using a similar analysis as in the beginning of Section 5.2, 
one verifies that the required analyticity conditions
on $f(x)=-\frac{t}{2\pi}\log x$
in Lemma~\ref{corput lemma 1 1} hold with $J=1$, $\lambda=1/\sqrt{\rho-1}$,
and $\eta=1/4\pi$.
Therefore, if we let
$\nu_1=\nu_1(1/\sqrt{\rho-1},1/4\pi)$ and 
\begin{equation}
x_{r,\ell}:=\frac{q f'(N_{\ell,r}+1)}{B_1} = -\frac{q t}{2\pi B_1}
\frac{1}{N_{r,\ell}+1},
\end{equation}
then by Lemma~\ref{corput lemma 1 1} 
the inner double sum in \eqref{main theorem 6} is bounded by 
\begin{equation}\label{main theorem 7}
\begin{split}
&\frac{2\nu_1}{\pi}
\frac{C_1R_{\ell}}{(\rho^{\ell}v_0)^{1/2}}
\left(\log\frac{D_1}{2B_1}+\frac{7}{4}+\frac{\pi}{2}\right)\\
&+ \frac{\nu_1}{\pi}
\frac{C_1}{(\rho^{\ell}v_0)^{1/2}}
\sum_{r=0}^{R_{\ell}-1} \min\left(\frac{\pi B_1
L_{\ell}}{q},\frac{1}{\|x_{r,\ell}\|}\right).
\end{split}
\end{equation}
We bound the sum over $r$ in \eqref{main theorem 7}
using Lemma~\ref{well spacing lemma}. To this end, note that 
\begin{equation}\label{xrell bounds}
\begin{split}
|x_{r+1,\ell} -x_{r,\ell}| \ge \frac{q |t|}{2\pi B_1} \frac{L_{\ell}}
{\lfloor \rho^{\ell+1} v_0\rfloor^2},\qquad
|x_{R_{\ell}-1,\ell}-x_{0,\ell}| \le \frac{q |t|}{2\pi B_1}\frac{\rho-1}{\lfloor
\rho^{\ell+1}v_0\rfloor}.
\end{split}
\end{equation}
Furthermore, since the sequence 
$x_{r,\ell}$ is monotonic in $r$, we may set 
$\delta$ in 
Lemma~\ref{well spacing lemma} to be the lower bound for
$|x_{r+1,\ell} -x_{r,\ell}|$ in \eqref{xrell bounds}, 
set $y-x$ as the upper bound for 
$|x_{R_{\ell}-1,\ell}-x_{0,\ell}|$ in \eqref{xrell bounds},  
and set $P=\pi B_1L_{\ell}/q$. (Note that $P\ge 2$.)
Therefore, applying the lemma, and multiplying out the bracket
in the resulting bound 
$2(y-x+1)(2P+\delta^{-1}\log (eP/2))$, gives
\begin{equation}
\begin{split}
\sum_{r=0}^{R_{\ell}-1} \min\left(\frac{\pi B_1
L_{\ell}}{q},\frac{1}{\|x_{r,\ell}\|}\right) &\le
\left(\frac{2(\rho-1)\lfloor \rho^{\ell+1}v_0\rfloor}{L_{\ell}} + \frac{4\pi
B_1}{q |t|} \frac{\lfloor \rho^{\ell+1}
v_0\rfloor^2}{L_{\ell}}\right)\log \left(\frac{e\pi B_1 L_{\ell}}{2q}\right)\\
&+ \frac{2(\rho-1)|t|L_{\ell}}{\lfloor\rho^{\ell+1}
v_0\rfloor} + \frac{4\pi B_1 L_{\ell}}{q}.
\end{split}
\end{equation}
Using similar inequalities as in Section 5.2, we deduce that
\begin{equation}\label{Lell bound 2}
 \frac{(\rho-1)\rho^{\ell} v_0}{|t|^{1/2}}\le L_{\ell} \le 
 \frac{(\rho-1)\rho^{\ell+1} v_0}{|t|^{1/2}} \le \frac{1}{5}\rho(\rho-1)q|t|^{1/2},\qquad
(\ell_0\le \ell<\ell_1).
\end{equation}
Consequently, since $B_1\le D_1$, we have 
\begin{equation}
\log\left(\frac{e\pi B_1L_{\ell}}{2q}\right) \le 
\log\left(\frac{e\pi \rho(\rho-1)D_1
|t|^{1/2}}{10}\right),\qquad (\ell_0\le \ell<\ell_1).
\end{equation}
Using these inequalities, the formulas 
$\sqrt{q}/D_1=\sqf(q)$,
$C_1/(\sqrt{CD} q^{1/6}) = \spf(q)$,
and executing the geometric sum over $\ell$, 
we therefore conclude that 
\begin{equation}\label{main theorem 9}
\begin{split}
&\sum_{\ell_0\le \ell<\ell_1}
\frac{C_1}{(\rho^{\ell}v_0)^{1/2}}
\sum_{r=0}^{R_{\ell}-1} \min\left(\frac{\pi B_1
L_{\ell}}{q},\frac{1}{\|x_{r,\ell}\|}\right) \le \\
&\mathfrak{v}_8\spf(q) \mathfrak{q}^{1/6}Z_8(\log \mathfrak{q})
+ \mathfrak{v}_9 \sqf(q) B_1 Z_8(\log \mathfrak{q})
+\mathfrak{v}_{10} \spf(q) \mathfrak{q}^{1/6} +\mathfrak{v}_{11} \sqf(q) B_1,
\end{split}
\end{equation}
where 
\begin{equation}
Z_8(X) := X+2\log(e\pi \rho(\rho-1)/10)
\end{equation}
and
\begin{equation}
\begin{split}
&\mathfrak{v}_8:= \frac{\rho^{3/2}}{\sqrt{\rho}-1},\qquad
\mathfrak{v}_9:=\frac{2\pi \rho^{5/2}}{\sqrt{5}(\rho-1)(\sqrt{\rho}-1)},\\
&\mathfrak{v}_{10}:=\frac{2\rho^{3/2}(\rho-1)^2}{\sqrt{\rho}-1},\qquad
\mathfrak{v}_{11}:=\frac{4\pi
\rho^{3/2}(\rho-1)}{\sqrt{5}(\sqrt{\rho}-1)}.
\end{split}
\end{equation}
Furthermore, using the bound
\begin{equation}\label{Rell bound 3}
R_{\ell}\le \rho |t|^{1/2},\qquad (\ell_0\le \ell<\ell_1).
\end{equation}
we obtain
\begin{equation}\label{main theorem 10}
\sum_{\ell_0\le \ell<\ell_1}
\frac{C_1R_{\ell}}{(\rho^{\ell}v_0)^{1/2}}
\left(\log\frac{D_1}{2B_1}+\frac{7}{4}+\frac{\pi}{2}\right)\le
\mathfrak{v}_{12} \spf(q) \mathfrak{q}^{1/6}Z_9(\log\mathfrak{q}),
\end{equation}
where
\begin{equation}
\mathfrak{v}_{12}:= \frac{\rho^{3/2}}{2(\sqrt{\rho}-1)},\qquad
Z_9(X) = X-2\log(2 \sqrt{t_0}) + \frac{7}{2}+\pi.
\end{equation}

We substitute \eqref{main theorem 10} and \eqref{main theorem 9} into
\eqref{main theorem 7} and \eqref{main theorem 6}, which gives (after some
simplification)
\begin{equation}\label{main theorem region 2}
\begin{split}
&\sum_{\ell_0\le \ell<\ell_1}
\left|\sum_{\rho^{\ell} v_0\le n < \rho^{{\ell}+1}v_0} 
\frac{\chi(n)}{n^{1/2+it}}\right| \le\\
&\frac{\nu_1}{\pi}\left(\mathfrak{v}_8\spf(q) \mathfrak{q}^{1/6}Z_8(\log \mathfrak{q})
+ \mathfrak{v}_9 \sqf(q) B_1 Z_8(\log \mathfrak{q})
+\mathfrak{v}_{10} \spf(q) \mathfrak{q}^{1/6} +\mathfrak{v}_{11} \sqf(q) B_1\right)\\
&+ \frac{2\nu_1}{\pi} \mathfrak{v}_{12} \spf(q)
\mathfrak{q}^{1/6}Z_9(\log\mathfrak{q}).
\end{split}
\end{equation}

\subsection{Sum over $\ell_1 \le \ell$}
Like before, we 
apply the triangle inequality, partial summation, and 
Lemma~\ref{exp reduce} with $J=0$, to obtain
\begin{equation}\label{main theorem 12}
\sum_{\ell_1\le \ell}
\left|\sum_{\rho^{\ell} v_0\le n < \rho^{{\ell}+1}v_0} 
\frac{\chi(n)}{n^{1/2+it}}\right| \le 
\sum_{\ell_1\le \ell}
\frac{\nu_0}{(\rho^{\ell}v_0)^{1/2}} \sum_{r=0}^{R_{\ell}-1} \max_{0\le \Delta \le
L_{\ell}}\left| 
\sum_{n=N_{r,\ell}+1}^{N_{r,\ell}+ \Delta} \chi(n)\right|.
\end{equation}
Here, $\nu_0 := \nu_0(1/\sqrt{\rho-1},1/2\pi)$. We use the bound
for non-principal characters at the bottom of page 139 of
\cite{hiary-char-sums}. Specifically, if $\chi\pmod{q}$ is non-principal, 
then $|\sum_n \chi(n) |\le 2\sqrt{q} \log q$. Using this in \eqref{main theorem
12}, we deduce that the r.h.s.\ is 
\begin{equation}\label{main theorem region 3}
\le 2\nu_0\sqrt{q}\log q \sum_{\ell_1\le \ell}
\frac{1}{(\rho^{\ell}v_0)^{1/2}} \le 
\frac{2\sqrt{5}\nu_0\sqrt{\rho}}{\sqrt{\rho}-1} \frac{\log q}{\sqrt{|t|}}
\le \nu_0 \mathfrak{v}_{13}Z_{10}(\log \mathfrak{q}),
\end{equation}
where
\begin{equation}
\mathfrak{v}_{13}:=\frac{2\sqrt{5}
\sqrt{\rho}}{(\sqrt{\rho}-1)\sqrt{t_0}},\qquad Z_{10}(X) := X-\log t_0. 
\end{equation}

\subsection{Summary}
We combine \eqref{main theorem region 0}, 
\eqref{main theorem region 1}, 
\eqref{main theorem region 2}, and
\eqref{main theorem region 3}, then 
evaluate the resulting numerical constants
with $\rho = 1.3$. This yields the theorem.

\section{Proof of Corollary~\ref{main theorem simple}}\label{theorem simple proof}
We may assume that $q>1$, otherwise the corollary follows from the bound 
\eqref{zeta bound} for principal characters.
By Lemma~\ref{postnikov lemma},
if $\chi$ is primitive, then 
$B=B_1=1$. Also, since $q$ is a sixth power then $\sqf(q)=\cbf(q)=1$ and 
$\spf(q)\le 1$.
Therefore, the functions $Z(X)$ and $W(X)$ in Theorem~\ref{main theorem}
satisfy
\begin{equation}
\begin{split}
Z(X)&\le -9.416 +15.6004 X+1.4327\sqrt{\Lambda(D)}  X^{3/2} 
+12.1673\sqrt{\Lambda(D) \tau(D)}X^{3/2},\\ 
W(X)&\le -296.84+ 114.07X,
\end{split}
\end{equation}
where we used that for $X\ge \log (2t_0)$ we have
\begin{equation}
\begin{split}
&65.5619 - 17.1704 X - 2.4781 X^2 + 0.6807 X^3 \le 0.6807 X^3,\\
&-1732 - 817.82 X +71.68 X^2 + 47.57X^3 \le 49.1 X^3.
\end{split}
\end{equation}
These inequalities are verified using \verb!Mathematica!.

It is easy to see that 
$\Lambda(p^a) \le \tau(p^a)$ which, by multiplicativity, implies that
$\sqrt{\Lambda(D)\tau(D)} \le \tau(D)$. 
Furthermore, since $q$ is a sixth power and $q>1$, then
$\tau(D) \le 0.572\tau(q)$ (as can be seen by considering the case
$q=2^{6a}$), $\tau(q) \ge 7$, and $q\ge 2^6$.
Substituting these bounds into the expressions for $Z(X)$ and $W(X)$, we 
verify via \verb!Mathematica! that
\begin{equation}
\begin{split}
Z(X) &\le \tau(q)(-1.3451 + 2.2287 X + 7.2695 X^{3/2}) \le 7.95 \tau(q) X^{3/2},\\
W(X) &\le \tau(q)(-42.4056 +  16.2958 X) \le 16.30 \tau(q) X. 
\end{split}
\end{equation}
holds for $X\ge \log(2^6 t_0)$. Therefore, 
\begin{equation}
|L(1/2+it,\chi)| \le 7.95 \tau(q) \mathfrak{q}^{1/6} \log^{3/2}\mathfrak{q} + 
16.30 \tau(q) \log\mathfrak{q}.
\end{equation}
Finally, using the bound $\mathfrak{q} \ge 2^6t_0$, we deduce that
$|L(1/2+it,\chi)| \le 9.05 \tau(q) \mathfrak{q}^{1/6}\log^{3/2} \mathfrak{q}$, 
proving the corollary.

\section{Proofs of bounds \eqref{partial summation bound} and 
\eqref{convexity bound}}\label{bounds proofs}

\begin{proof}[Proof of bound \eqref{partial summation bound}]
Since $\chi$ is nonprincipal, we have
\begin{equation}
L(1/2+it,\chi)  = \sum_{n\le M}
\frac{\chi(n)}{n^{1/2+it}}+\mathcal{R}_M(t,\chi),
\end{equation}
where the remainder 
$\mathcal{R}_M(t,\chi) := \sum_{n>M}^{\infty} \chi(n) n^{-1/2-it}$
is just the tail of the Dirichlet series. (We do not
require that $M> 0$ be an integer.)
To estimate the tail, we use  
partial summation \cite[formula (1)]{rubinstein-computational-methods}, 
\begin{equation}\label{partial sum 1}
\begin{split}
\left|\sum_{M<n\le M_2} \frac{\chi(n)}{n^{1/2+it}}\right| 
\le& 
\frac{1}{\sqrt{M_2}} \left|\sum_{n\le M_2} \chi(n) \right|+
\frac{1}{\sqrt{M}} \left|\sum_{n\le M} \chi(n)\right|\\ 
& + (1/2+|t|)\int_M^{M_2} \left|\sum_{1\le n\le u} \chi(n)\right| u^{-3/2}\, du.
\end{split}
\end{equation}
We bound the character sums on the r.h.s.\ of \eqref{partial sum 1} 
using the P\'olya-Vinogradov inequality in 
\cite[\textsection{23}]{davenport-book}.
This asserts that
if $\chi$ is a primitive character  modulo $q>1$ then
$|\sum_{N_1\le n< N_2} \chi(n) |\le  \sqrt{q}\log q$.
Substituting this in \eqref{partial sum 1}, taking the limit $M_2\to \infty$, 
and executing the integral gives
\begin{equation}
|L(1/2+it,\chi)|\le  2\sqrt{M} + \frac{2\sqrt{q}\log q}{\sqrt{M}}(|t|+1).
\end{equation}
The claimed bound follows on choosing $M=(|t|+1)\sqrt{q}\log q$.
\end{proof}

\begin{remark}
If $\chi$ is merely assumed to be nonprincipal, then the bound \eqref{partial
summation bound} still holds but with an extra factor of $\sqrt{2}$ in front. 
One simply uses the P\'olya-Vinogradov inequality stated 
in \cite[page 139]{hiary-char-sums} in the proof.
\end{remark}

\begin{proof}[Proof of bound \eqref{convexity bound}]
Since
$|L(1/2+it,\chi)| = |L(1/2-it,\overline{\chi})|$ 
and the proof will apply symmetrically
to $L(1/2+it,\chi)$ and $L(1/2+it,\overline{\chi})$, 
we may assume that $t\ge 0$. 
Let $n_1=\lfloor \sqrt{qt/(2\pi)}\rfloor$. Since $qt\ge 2\pi$, 
\cite[Theorem 5.3]{habsieger} implies that 
\begin{equation}\label{approx func eq formula}
|L(1/2+it,\chi)| \le (2 + \delta_t)\left|\sum_{n=1}^{n_1}
\frac{\chi(n)}{n^{1/2+it}}\right|+|\mathcal{R}(t,\chi)|,
\end{equation}
where\footnote{The appearance of $\delta_t$ in \eqref{approx func eq formula} 
is due to a slight imperfection in the form of the approximate functional
equation proved in \cite{habsieger}, and is not significant otherwise.}
$\displaystyle \delta_t := e^{\frac{\pi}{24t}+\frac{1}{12t^2}}-1$ and 
\begin{equation}\label{calR bound}
|\mathcal{R}(t,\chi)|\le \frac{264.72 q^{1/4}\log q}{t^{1/4}} + \frac{11.39
q^{3/4}}{t^{3/4}}e^{-0.78\sqrt{t/q}}.
\end{equation}
To prove this, 
we specialize \cite[Theorem 5.3]{habsieger}
to the critical line, taking $X=Y$ with $2\pi X^2 = qt$,
then appeal to well-known properties of Gauss sums. Put together, this yields
\begin{equation}\label{L formula}
L(1/2+it,\chi) = \sum_{n\le X} \frac{\chi(n)}{n^{1/2+it}} +
F(t,\chi)\sum_{n\le X} \frac{\overline{\chi(n)}}{n^{1/2-it}}+ \mathcal{R}(t,\chi),
\end{equation}
where $G(\chi,-1)$ is a Gauss sum and  
\begin{equation}\label{F def}
F(t,\chi):=\frac{(2\pi i )^{1/2+it}q^{-1/2-it}G(\chi,-1)}{\Gamma(1/2+it)}. 
\end{equation}
(Here, $(2\pi i )^{1/2+it}$ is defined using the principal branch of the
logarithm.) 
We estimate $\mathcal{R}(t,\chi)$ in \eqref{L formula} using the case ``$X\le Y$'' in  \cite[Theorem 5.3]{habsieger}. Since we specialized $X=\sqrt{qt/(2\pi)}$, we obtain 
\begin{equation}
|\mathcal{R}(t,\chi)| 
\le \left(167.2(2\pi)^{1/4}\log q + \frac{2.87 (2\pi)^{3/4}
\sqrt{q}}{\sqrt{t}}e^{-\sqrt{\pi^3/50} \sqrt{t/q}}\right)\frac{q^{1/4}}{t^{1/4}}.
\end{equation}
The claimed estimate \eqref{calR bound} 
for $\mathcal{R}(t,\chi)$ follows on noting that
$167.2(2\pi)^{1/4} < 264.72$, $2.87(2\pi)^{3/4} < 11.39$, and
$\pi^{3/2}/\sqrt{50} > 0.78$.

To bound the factor $1/\Gamma(1/2+it)$ appearing in the definition of $F(t)$, we mimic the proof of  \cite[Lemma 2.1]{habsieger} with minor adjustments. This gives 
\begin{equation}\label{gamma bound}
\frac{1}{|\Gamma(1/2+it)|} \le \frac{e^{\frac{\pi t}{2}+\frac{\pi}{24t}+\frac{1}{12t^2}}}{\sqrt{2\pi}},\qquad (t>0).
\end{equation}
Combining \eqref{gamma bound} with the facts $|G(\chi,-1)|=\sqrt{q}$ and 
$|(2\pi i )^{1/2+it}| = \sqrt{2\pi}e^{-\pi t/2}$ gives
$|F(t,\chi)|\le e^{\frac{\pi}{24t}+\frac{1}{12t^2}}=1+\delta_t$.
Since the second sum in the approximate functional equation 
\eqref{L formula} is just the complex
conjugate of the first sum there, this proves \eqref{approx func eq
formula}.

Last, we trivially estimate the sum in 
 \eqref{approx func eq formula}, then use the assumption $t\ge \sqrt{q}\ge
 \sqrt{2}$ and monotonicity 
 to bound $\mathcal{R}(t,\chi)$ and $\delta_t$. This gives (on noting
 that $\log q \le \log \mathfrak{q}^{2/3}$)
\begin{equation}
|L(1/2+it,\chi)|\le 
\left(\frac{2(1+\delta_{\sqrt{2}})}{(2\pi)^{1/4}}+
11.39e^{-\frac{0.78}{2^{1/4}}}\right) \mathfrak{q}^{1/4} 
+ 264.72 \mathfrak{q}^{1/12}\log \mathfrak{q}^{2/3}. 
\end{equation}
Denote the r.h.s.\ above by $(*)$. Using \verb!Mathematica! we verify that
the equation $(*)=124.46 \mathfrak{q}^{1/4}$ has no real solution if $\mathfrak{q}
\ge 10^9$. Furthermore, $(*)$ is smaller than $124.46 \mathfrak{q}^{1/4}$ when
$\mathfrak{q}=10^9$. Hence, $(*) \le 124.46 \mathfrak{q}^{1/4}$ 
for all $\mathfrak{q} \ge 10^9$, as claimed.
\end{proof}

\bibliographystyle{amsplain}
\bibliography{hybridDirichlet}

\providecommand{\bysame}{\leavevmode\hbox to3em{\hrulefill}\thinspace}
\providecommand{\MR}{\relax\ifhmode\unskip\space\fi MR }
\providecommand{\MRhref}[2]{%
  \href{http://www.ams.org/mathscinet-getitem?mr=#1}{#2}
}
\providecommand{\href}[2]{#2}
\begin{thebibliography}{10}

\bibitem{blt}
M.~B. Barban, Yu.~V. Linnik, and N.~G. Tshudakov, \emph{On prime numbers in an
  arithmetic progression with a prime-power difference}, Acta Arith. \textbf{9}
  (1964), 375--390. \MR{0171766 (30 \#1993)}

\bibitem{cheng-graham}
Yuanyou~F. Cheng and Sidney~W. Graham, \emph{Explicit estimates for the
  {R}iemann zeta function}, Rocky Mountain J. Math. \textbf{34} (2004), no.~4,
  1261--1280. \MR{2095256 (2005f:11179)}

\bibitem{davenport-book}
Harold Davenport, \emph{Multiplicative number theory}, third ed., Graduate
  Texts in Mathematics, vol.~74, Springer-Verlag, New York, 2000, Revised and
  with a preface by Hugh L. Montgomery. \MR{1790423 (2001f:11001)}

\bibitem{habsieger}
Laurent Habsieger, \emph{Explicit approximate functional equations for various
  classes of {D}irichlet series}, Ramanujan J. \textbf{9} (2005), no.~1-2,
  93--110. \MR{2166381 (2006g:11170)}

\bibitem{heath-brown-1}
D.~R. Heath-Brown, \emph{Hybrid bounds for {D}irichlet {$L$}-functions},
  Invent. Math. \textbf{47} (1978), no.~2, 149--170. \MR{0485727 (58 \#5549)}

\bibitem{heath-brown-2}
\bysame, \emph{Hybrid bounds for {D}irichlet {$L$}-functions. {II}}, Quart. J.
  Math. Oxford Ser. (2) \textbf{31} (1980), no.~122, 157--167. \MR{576334
  (81h:10059)}

\bibitem{hiary-char-sums}
Ghaith~A. Hiary, \emph{Computing {D}irichlet character sums to a power-full
  modulus}, J. Number Theory \textbf{140} (2014), 122--146. \MR{3181649}

\bibitem{hiary-simple-alg}
\bysame, \emph{An alternative to {R}iemann-{S}iegel type formulas}, Math. Comp.
  \textbf{85} (2016), no.~298, 1017--1032. \MR{3434892}

\bibitem{hiary-corput}
\bysame, \emph{An explicit van der {C}orput estimate for {$\xi(1/2+it)$}},
  Indag. Math. (N.S.) \textbf{27} (2016), no.~2, 524--533. \MR{3479170}

\bibitem{milicevic}
Djordje Mili{\'c}evi{\'c}, \emph{Sub-{W}eyl subconvexity for {D}irichlet
  {$L$}-functions to prime power moduli}, Compos. Math. \textbf{152} (2016),
  no.~4, 825--875. \MR{3484115}

\bibitem{rademacher}
Hans Rademacher, \emph{On the {P}hragm\'en-{L}indel\"of theorem and some
  applications}, Math. Z \textbf{72} (1959/1960), 192--204. \MR{0117200 (22
  \#7982)}

\bibitem{rubinstein-computational-methods}
Michael Rubinstein, \emph{Computational methods and experiments in analytic
  number theory}, Recent perspectives in random matrix theory and number
  theory, London Math. Soc. Lecture Note Ser., vol. 322, Cambridge Univ. Press,
  Cambridge, 2005, pp.~425--506. \MR{2166470 (2006d:11153)}

\bibitem{titchmarsh}
E.~C. Titchmarsh, \emph{The theory of the {R}iemann zeta-function}, second ed.,
  The Clarendon Press Oxford University Press, New York, 1986, Edited and with
  a preface by D. R. Heath-Brown. \MR{882550 (88c:11049)}

\end{thebibliography}
\end{document}